\documentclass[a4paper,12pt]{article}
\usepackage[T2A]{fontenc}                      
\usepackage[cp1251]{inputenc}           
\usepackage[all]{xy}
\usepackage{amssymb}
\usepackage{cite}
\usepackage{cmap}
\usepackage{latexsym}
\usepackage{enumerate}
\usepackage{amsmath, amsthm, amscd, amsfonts, amssymb, graphicx, color}
\usepackage[left=2.5cm,right=2.5cm,top=2cm,bottom=2cm]{geometry}
\usepackage{indentfirst}
\usepackage{array}
\usepackage{bm}
\usepackage{float}

\usepackage{wrapfig}

\usepackage{authblk}

\newtheorem{corollary}{Corollary}
\newtheorem{conjecture}{Conjecture}

\newtheorem{theorem}{Theorem}
\newtheorem{lemma}{Lemma}
\newtheorem{definition}{Definition}

\title{On infinite families of $[P_n]$-irregular graphs}

\author[1]{Tatiana~Dovzhenok\thanks{Corresponding author. E-mail: \texttt{t.dovzhenok@mail.ru}}}
\author[2,3]{Ilya~Lukashenko}
\author[3]{Andrei~Mikhalev}
\author[3]{Yahor~Filiuta}

\affil[1]{\small Research Laboratory ``Mathematics of Hybrid Intelligence Systems'', \
	
	Francisk Skorina Gomel State University, Gomel, 246028, Belarus}
\affil[2]{Faculty of Information Technologies and Control, Belarusian State \ 
	
	University of Informatics and Radioelectronics, Minsk, 220013, Belarus}
\affil[3]{Research Laboratory ``Algebra and Geometry of Complex Systems'', \
	
	Francisk Skorina Gomel State University, Gomel, 246028, Belarus}

\date{} 

\begin{document}	
\maketitle

\begin{abstract}
	This paper presents the first systematic study of $[F]$-irregular graphs, a concept that parallels classical $F$-irregularity. For a fixed graph $F$, a graph $G$ is \mbox{$[F]$-irregular} if the 
	numbers of its induced subgraphs isomorphic to $F$ containing a given vertex are pairwise distinct for all vertices of~$G$. We prove that there exist infinitely many $[P_n]$-irregular graphs for any path $P_n$ of order $n \ge 3$. 
	We establish that a non-trivial $[P_3]$-irregular graph of order $k$ exists if and only if $k \ge 7$. 
	Finally, we propose the Strong Conjecture on $[F]$-irregular graphs.
	
	\textbf{Keywords}: induced subgraphs, induced paths, graph irregularity, $[F]$-degree of a vertex,  $[F]$-irregular graph, Strong Conjecture on $[F]$-irregular graphs.
\end{abstract}
	
\section{Introduction}
While regular graphs are central to algebraic and structural graph theory, vertex-distinguishing problems have motivated multiple paradigms of irregularity~\cite{gtwa}. In what follows, we exclusively consider simple, finite, and undirected graphs. Since no non-trivial graph can have pairwise distinct vertex degrees, a core challenge is to define generalized degree concepts that distinguish all vertices of the graph.

One such direction originates from the seminal work of Chartrand, Holbert, Oellermann, and Swart~\cite{r1}, who associated each vertex $v$ of a graph $G$ with the number of its subgraphs isomorphic to a fixed graph $F$ that contain $v$. This quantity, termed the $F$-degree of $v$, naturally captures structural asymmetry: a~graph $G$ is called $F$-irregular if its vertices have mutually distinct $F$-degrees. In~\cite{r1}, the authors put forward a fundamental conjecture on the existence of such graphs, motivated by their study of the cases where $F$ is a complete graph or a star.

\begin{conjecture}[Chartrand et al.~\cite{r1}, 1987]\label{con1}
	For every connected graph $F$ of order $|F| \ge 3$, there exists a non-trivial $F$-irregular graph.
\end{conjecture}

Further progress for broader classes of graphs was made only in 2024, when Dovzhenok, Filuta, and Chuhai~\cite{r2} demonstrated that infinite families of $F$-irregular graphs exist for all $2$-connected graphs $F$ with minimum degree $2$. In the same work, the authors proposed a significantly stronger version of Conjecture~\ref{con1}.

\begin{conjecture}[Strong Conjecture about $F$-irregular graphs, Dovzhenok et al.~\cite{r2}, 2024]\label{con2}
	For every connected graph $F$ of order $|F| \ge 3$, there are infinitely many $F$-irregular graphs.
\end{conjecture}

Subsequently, Dovzhenok verified Conjecture~\ref{con1} for paths~\cite{r3} and confirmed Conjecture~\ref{con2} for all graphs $F$ of diameter~2~\cite{r4}.

\smallskip
In the present paper, in contrast to the classical setting based on ordinary subgraphs, we focus on irregularity through the lens of \emph{induced} subgraphs. This approach was originally suggested by Ali, Chartrand, and Zhang in~\cite{gtwa}.

\begin{definition}
	Let $F$ and $G$ be graphs. The \emph{induced $F$-degree} (or simply \emph{$[F]$-degree}) of a vertex $v$ in $G$ is the number of induced subgraphs of $G$ that are isomorphic to $F$ and contain $v$. A graph $G$ is \emph{$[F]$-irregular} if the $[F]$-degrees of  its vertices are pairwise distinct.
\end{definition}

Unlike ordinary subgraphs, which are preserved under edge addition, their induced counterparts are highly sensitive to any such modification. This structural fragility complicates the construction of $[F]$-irregular graphs compared to the classical case.

As far as we are aware, this work initiates the systematic investigation of $[F]$-irregularity, focusing on the canonical class of paths. Specifically, we completely solve the existence problem for $[P_n]$-irregular graphs posed in~\cite{r3}, and fully determine the possible orders of non-trivial $[P_3]$-irregular graphs. Our main contributions are summarized in the following theorems.

\begin{theorem}\label{t1}
	For any path $P_n$ of order $n \ge 3$, there exist infinitely many $[P_n]$-irregular graphs.
\end{theorem}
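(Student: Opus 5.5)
The plan is to give, for each fixed $n\ge 3$, an explicit infinite family of graphs $G_{n,m}$ ($m\ge m_0(n)$) and verify $[P_n]$-irregularity directly. I would work with trees. In a tree every path is an induced path, so the $[P_n]$-degree of $v$ is simply the number of paths with $n$ vertices through $v$. This turns the induced problem, which the introduction notes is fragile under adding edges, back into a pure counting problem. Two obstructions must be designed away. First, twin vertices (for instance, two leaves on a common vertex) always have equal $[F]$-degrees, so every branch vertex may carry at most one pendant path. Second, long bare paths make all their interior vertices have $[P_n]$-degree exactly $n$, so every vertex must lie within distance less than $n$ of branching that distinguishes it.

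Concretely, I would take a caterpillar-like tree $T_{n,m}$. Its spine is $u_1u_2\cdots u_m$. To each $u_i$ I attach a single pendant path of length $\ell_i$, where the $\ell_i$ are at most $n-2$ and vary with $i$ in a controlled way. In addition, $u_i$ gets $c_i$ extra subdivided branches of length one beyond the first, so that the degrees $d(u_i)=c_i+3$ are strictly increasing and grow fast, say $c_i$ of order $M^i$ for a large $M=M(n)$. The key steps, in order, are as follows.
\begin{enumerate}[(i)]
\item Write the number of $n$-vertex paths through a vertex $v$ as a sum over ordered pairs of branches at the vertex of the path closest to the spine. This gives $[P_n]$-degrees as polynomials in the $c_i$.
\item Observe that for $v$ at distance $t<n$ from $u_i$, the dominant term is of the form $\alpha_t\, c_i^{2}$ (or $\alpha_t c_i$ for paths turning only once at $u_i$), with $\alpha_t$ decreasing in $t$. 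The contributions of other spine vertices are of strictly lower order because $M$ is large.
\item Hence vertices near different $u_i$ are separated by the growth of the $c_i$, and vertices near the same $u_i$ at different distances are separated by the leading coefficients.
\item Vertices near the same $u_i$ at the same distance $t$ must then be distinguished. Here the single pendant path of length $\ell_i$ and the short branches play different roles. I would also break ties among the short branches by attaching to them further leaves in distinct numbers $1,2,\dots$, so that no two of them are twins.
\end{enumerate}

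The main obstacle is step (iv) together with the leftover lower-order terms: ensuring that no accidental equality occurs between a vertex near $u_i$ and one near $u_{i\pm1}$ whose leading terms differ but whose full polynomials might coincide. I would handle this by choosing the growth parameter $M$ much larger than all constants depending on $n$, so that each $[P_n]$-degree lies in a disjoint integer interval determined by its pair $(i,\text{local type})$. If the $c_i$-based separation is too delicate for $n=3$ (where paths are very short), I would treat $n=3$ separately using the explicit formula
\[
\deg_{[P_3]}(v)=\binom{d(v)}{2}+\sum_{u\sim v}\bigl(d(u)-1\bigr),
\]
valid in trees. Finally, since $m$ is arbitrary, the family $\{T_{n,m}\}_m$ is infinite, which proves Theorem~\ref{t1}.
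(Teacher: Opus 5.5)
Your plan has a genuine gap, and it is already fatal at $n=3$: no tree of order at least $3$ is $[P_3]$-irregular, so the fallback ``treat $n=3$ separately via the tree formula'' cannot succeed. By your own formula, a leaf $v$ with neighbour $u$ has $[P_3]$-degree $d(u)-1$. So in a $[P_3]$-irregular tree the $L\ge 2$ leaves must have distinct support vertices with pairwise distinct degrees, all $\ge 2$. Hence $\sum_{d(v)\ge 2}(d(v)-2)\ge 0+1+\dots+(L-1)=\frac12 L(L-1)$. But $\sum_v (d(v)-2)=-2$ in any tree, which gives $\sum_{d(v)\ge 2}(d(v)-2)=L-2$. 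Then $L-2\ge \frac12 L(L-1)$ means $L^2-3L+4\le 0$, which has no solutions. Cycles are unavoidable here; the paper's $G(m,3)$ contains the $4$-cycles $xa_ib_1a_kx$.

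For general $n$ the construction fails for a related reason. An $[F]$-irregular graph must have trivial automorphism group, since an automorphism $\sigma$ gives $v$ and $\sigma(v)$ equal $[F]$-degrees. Your tie-breaking in (iv), attaching $k$ leaves to the $k$-th short branch, creates twin leaves for every $k\ge 2$, contradicting your own first observation.

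This cannot be patched locally. In an asymmetric tree the branches hanging at $u_i$ must be pairwise non-isomorphic and rooted-asymmetric, and there are only finitely many such rooted trees of any bounded height; for height at most $1$ there are just two (a bare vertex, and a vertex with one leaf). Your step (ii) only handles vertices within distance less than $n$ of the spine, so $c_i$ is bounded by a constant depending on $n$. Thus $c_i\sim M^i$ with $m\to\infty$ is impossible. If you let the branches grow tall instead, most of their vertices lie at distance at least $n$ from the spine. There the $[P_n]$-degree involves no $c_j$ at all, and (iii) says nothing about them.

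Independently, the dominance claim in (ii) is backwards. A vertex at small distance from $u_i$ also reaches $u_{i+1},u_{i+2},\dots$ and their branches within $n-1$ edges, and these contribute terms of order $c_{i+1}\approx Mc_i$. For example, with $n=4$, $u_i$ lies on roughly $c_ic_{i+1}$ paths $xu_iu_{i+1}z$. So the leading term is governed by the largest reachable spine index, not by $u_i$.

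What is missing is a source of asymmetry that scales with $m$. The paper gets it from a non-tree core: a clique on $a_1,\dots,a_m$ joined to $b_1,\dots,b_m$ by the nested edges $a_ib_j$ ($j\le i$). There the $[P_n]$-degrees are explicit and linear in the index, and the vertex classes are separated by residues modulo $n-1$ (modulo $m$ for $n=3$) together with explicit inequality chains.
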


\begin{theorem}\label{t2}
	For an integer $k$, a non-trivial $[P_3]$-irregular graph of order $k$ exists if and only if $k \ge 7$.
\end{theorem}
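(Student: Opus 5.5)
The plan is to prove the two directions separately: nonexistence for $2\le k\le 6$ (here non-trivial means order at least $2$), and an explicit construction for every $k\ge 7$. Both directions rest on a local formula for the $[P_3]$-degree. An induced $P_3$ through $v$ either has $v$ as its centre, in which case it is a non-adjacent pair in $N(v)$, or has $v$ as an end, in which case it is a path $v\,u\,w$ with $w\notin N[v]$. Writing $d=d(v)$ and $t(v)$ for the number of triangles through $v$, this gives
\[
  d_{[P_3]}(v)=\binom{d}{2}-t(v)+\sum_{u\in N(v)}\bigl(d(u)-1\bigr)-2t(v).
\]
Summing over all vertices, $\sum_v d_{[P_3]}(v)=3p$, where $p$ is the number of induced $P_3$'s in $G$.

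\textbf{Necessity ($k\le 6$ is impossible).} I would first record several structural restrictions on a $[P_3]$-irregular graph $G$.
\begin{enumerate}[(i)]
\item $G$ is twin-free. If two vertices $x,y$ have $N(x)\setminus\{y\}=N(y)\setminus\{x\}$, the automorphism swapping them shows their $[P_3]$-degrees are equal.
\item At most one vertex has $[P_3]$-degree $0$. A vertex has $[P_3]$-degree $0$ exactly when its component is a clique, and any clique component with at least two vertices consists of twins. Hence $G$ has at most one clique component, and that component is a single isolated vertex.
\item The $[P_3]$-degrees are $k$ distinct non-negative integers, so $3p\ge \binom{k}{2}$. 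Each individual $[P_3]$-degree is also bounded above by the total number of induced $P_3$'s, namely $p$.
\end{enumerate}

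Using (i) and (ii), it suffices to examine twin-free graphs on at most $6$ vertices with at most one isolated vertex; deleting that isolated vertex, we are left with connected twin-free graphs of order at most $6$. This is a short, finite list: a few dozen graphs on $5$ and $6$ vertices up to isomorphism. Such graphs always have a nontrivial automorphism or a forced coincidence of degrees, which I would verify case by case using the formula. To shrink the list I would also use the pigeonhole bound: with only $k$ vertices and $p$ constrained by the edge count, the $k$ distinct values must fit below the maximum possible $[P_3]$-degree. For small $k$ this bound alone rules out $k\le 4$ quickly, since for instance on $4$ vertices $p\le 4$ while we need degrees $0,1,2,3$ whose sum $6$ must equal $3p$ with a very rigid structure. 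The cases $k=5$ and $k=6$ are then handled by splitting according to the degree sequence of $G$ and the number of triangles, computing the $[P_3]$-degree vector for each twin-free candidate.

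\textbf{Sufficiency ($k\ge 7$).} I would exhibit one explicit graph on $7$ vertices and compute its $[P_3]$-degree vector to check that all entries are distinct. I would then give a uniform family $G_k$ for all $k\ge 7$, for example a path-like backbone with pendant vertices or chords attached at prescribed positions. The $[P_3]$-degrees of $G_k$ would be computed in closed form from the local formula; since induced $P_3$'s are local, only vertices near the attachments need separate treatment. The goal is to show these values form a strictly increasing sequence along a natural ordering of the vertices. An alternative is an inductive step $G_k\to G_{k+1}$ that adds one vertex, shifting all old $[P_3]$-degrees in a controlled, order-preserving way while giving the new vertex a fresh value.

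\textbf{Main obstacle.} I expect the main obstacle to be the exhaustive exclusion for $k=6$, where the twin-free candidates are most numerous and the numerical bounds are weakest. The $k=7$ base example is only slightly larger, so the threshold is tight. To keep the $k=6$ case manageable I would first use the sum identity $3p=\sum_v d_{[P_3]}(v)\ge 15$ and the bound $\max_v d_{[P_3]}(v)\le p$ to restrict the number of edges, and only then enumerate the remaining graphs.
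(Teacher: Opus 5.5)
\textbf{Necessity.} Your necessity argument is a legitimate finite check, and your counting does dispose of $k\le 4$. Indeed, $k$ distinct values in $[0,p]$ summing to $3p$ force $(k-3)p\ge\binom{k}{2}$, which is impossible for $k\le 3$, and impossible for $k=4$ because $p\le 4$. Your route differs from the paper's only in how the list is pruned. Every automorphism preserves $[P_3]$-degrees, so $G$ must be asymmetric. The paper cites the known fact that there are no asymmetric graphs of orders $2$ to $5$ and exactly eight of order $6$, and checks those eight. Your twin-freeness is just the transposition case of asymmetry, so your list is longer, but the route is sound.

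\textbf{Sufficiency: the genuine gap.} You never exhibit a graph, not even the order-$7$ base case, and the template you sketch cannot work. Your own identities show why. Since the $k$ values are distinct, $3p\ge\binom{k}{2}$. Every induced $P_3$ is a non-adjacent pair in the neighbourhood of its centre, so $p\le\sum_v\binom{d(v)}{2}$. Likewise $d_{[P_3]}(v)\le\frac{3}{2}\Delta(\Delta-1)$ must reach $k-1$. So a path-like backbone with bounded local decorations is impossible for large $k$. Your remark that ``only vertices near the attachments need separate treatment'' also points the wrong way. Vertices with the same local data $d(v)$, $t(v)$, $\{d(u)\}_{u\in N(v)}$ receive the same $[P_3]$-degree, so every vertex must be distinguished, and twin-freeness even forbids two pendants at one backbone vertex.

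\textbf{The missing idea.} What is needed is a \emph{graded} degree structure. The paper uses the half-graph $a_ib_j$ ($1\le j\le i\le m$) plus a vertex $x$ joined to $a_1,\dots,a_{m-1}$. This graph is bipartite, so $d_{[P_3]}(v)=\binom{d(v)}{2}+\sum_{u\in N(v)}(d(u)-1)$. That gives $X=(m-1)^2$, $A_m=m(m-1)$, $A_i=m(i+1)-2$ and $B_j=m(m-j+1)-1$, which are pairwise distinct by their residues mod $m$ and by monotonicity, for every odd order $2m+1\ge 9$. Order $7$ is handled by a separate explicit graph. Each even order $k\ge 8$ comes from adding one isolated vertex to the order-$(k-1)$ example; this works because all its $[P_3]$-degrees are positive. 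This is exactly your observation (ii) that only one isolated vertex can be used, which is why one parity needs a genuine infinite family.

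Your inductive alternative is likewise unsupported. Adding a vertex changes the $[P_3]$-degrees of its neighbours and second neighbours by amounts that depend on local structure, so nothing guarantees an order-preserving shift.
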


Our approach is constructive: for each $n \ge 3$, we provide explicit families of graphs, determine the exact values of all $[P_n]$-degrees, and show that they are pairwise distinct.

The remainder of this paper is organized as follows. Section 2 introduces the necessary preliminaries and notation. Section 3 is devoted to the proof of Theorem~\ref{t2}, characterizing $[P_3]$-irregular graphs. Sections 4 and 5 focus on $[P_4]$- and $[P_5]$-irregularity, respectively. In Section 6, we settle the general case for all $n \ge 6$, thereby completing the proof of Theorem~\ref{t1}. Finally, Section 7 outlines the Strong Conjecture on $[F]$-irregular graphs.

\section{Preliminaries}
All numerical parameters in this paper are integers. Any set of indexed elements of the form \mbox{$\bigl\{\alpha_x \bigm\vert i \le x \le j\bigr\}$} is also written as \mbox{$\{\alpha_i, \dots, \alpha_j\}$}. For a graph $G$, let $V(G)$ and $E(G)$ be its vertex and edge sets, respectively. The \textit{order} of $G$ is $|G| = |V(G)|$, and $G$ is \textit{non-trivial} if $|G| \ge 2$. An edge incident to vertices $u$ and $v$ is denoted by $uv$. The \textit{degree} of a vertex $v \in V(G)$ is the number of edges incident to $v$, and $G$ is \textit{regular} if all its vertices have the same degree. We use the notation \mbox{$K_m$} for the complete graph of order~$m$.
A path $P_n$ of order $n$ is a graph with \mbox{$V(P_n) = \{u_1, \dots, u_n\}$} and \mbox{$E(P_n) = \bigl\{u_iu_{i+1} \bigm\vert 1 \le i \le n-1\bigr\}$}. We write such a path as the vertex sequence \mbox{$u_1 u_2 \dots u_n$}, where $u_1$ and $u_n$ are called the \textit{end-vertices}. 
If a graph contains a unique path with the end-vertices $u$ and $v$, this path is denoted by \mbox{$u \dots v$}; in particular, \mbox{$u \dots u$} represents the single vertex $u$. A~graph is \textit{connected} if every pair of its vertices is joined by a path. 

A subgraph $H$ of $G$ is \textit{induced} if \mbox{$E(H) = \bigl\{uv \in E(G) \bigm\vert u, v \in V(H)\bigr\}$}. For a vertex $v \in V(G)$, we denote by \mbox{$\mathcal{P}_n(G,v)$} the family of all induced subgraphs of $G$ isomorphic to $P_n$ and containing $v$. Consequently, the $[P_n]$-degree of a vertex $v$ in $G$ is exactly \mbox{$|\mathcal{P}_n(G,v)|$}.

\section{On the order of $[P_3]$-irregular graphs}

\subsection{Construction of $[P_3]$-irregular graphs of odd order $k \ge 9$}
We begin by introducing a parameterized family of graphs $\bigl\{G(m,3)\bigr\}_{m \ge 4}$, where each graph has a corresponding odd order $k = 2m + 1$.

\begin{definition}
	Let $m \ge 4$ be an integer. The graph $G(m,3)$, schematically shown in Figure~1, is specified by its vertex and edge sets as follows:
\vspace{-0.2cm}	
\begin{align*}
	V \bigl(G(m,3)\bigr) &= \{a_1, \dots, a_m\} \cup \{b_1,  \dots, b_m\} \cup \{x\}, \\[1mm]
	E \bigl(G(m,3)\bigr) &= \bigl\{ xa_i \bigm\vert 1 \le i \le m-1 \bigr\} \cup \bigl\{ a_i b_j \bigm\vert 1 \le j \le i \le m \bigr\}.
\end{align*}
\end{definition}
\vspace{-0.8cm}
\begin{figure}[ht]
	\centering
	\includegraphics[width=8cm]{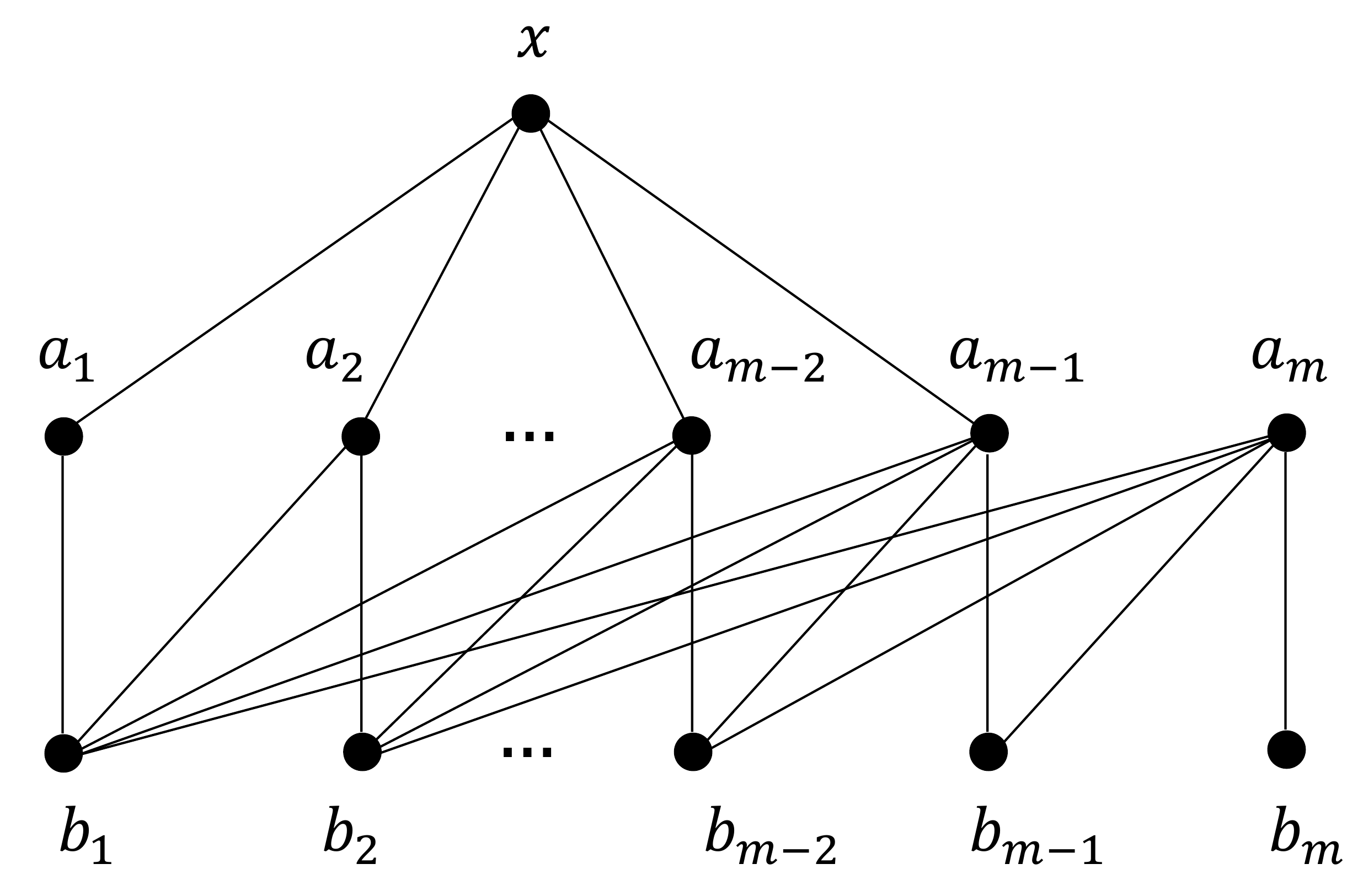}
	\caption{\small Graph $G(m,3)$.}
	\label{fig1}
\end{figure}

For the graph $G(m,3)$, let $X$, $A_i$, and $B_j$ denote the $[P_3]$-degrees of the vertices $x$, $a_i$, and $b_j$, respectively, where $1 \le i, j \le m$.

\begin{lemma}\label{l1}
	The vertex $[P_3]$-degrees in $G(m,3)$ are given by
	\begin{align*}
		X &= (m-1)^2, \qquad A_m = m(m-1), \\[1mm]
		A_i &= m(i + 1) - 2 \quad \text{for } 1 \le i \le m-1, \\[1mm]
		B_j &= m(m-j+1) - 1 \quad \text{for } 1 \le j \le m.
	\end{align*}
\end{lemma}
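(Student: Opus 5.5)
The key observation is that $G(m,3)$ is bipartite, with parts $\{a_1,\dots,a_m\}$ and $\{x,b_1,\dots,b_m\}$: every edge joins some $a_i$ to either $x$ or some $b_j$. A bipartite graph has no triangles, so the two end-vertices of any path $uvw$ are non-adjacent. Hence every path on three vertices is automatically induced, and $\mathcal{P}_3(G(m,3),v)$ is simply the set of all paths of length two through $v$.

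Such a path contains $v$ either as its centre or as an end-vertex. This gives the formula
\[
|\mathcal{P}_3(G,v)| = \binom{d(v)}{2} + \sum_{u\in N(v)} \bigl(d(u)-1\bigr),
\]
where $d(\cdot)$ denotes degree and $N(v)$ the neighbourhood. In the second term, each neighbour $u$ serves as the centre and the third vertex is any other neighbour of $u$.

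First I will record the degrees, read directly off the edge set:
\begin{itemize}
\item $d(x)=m-1$;
\item $d(a_i)=i+1$ for $1\le i\le m-1$, since $a_i$ is adjacent to $x,b_1,\dots,b_i$;
\item $d(a_m)=m$;
\item $d(b_j)=m-j+1$, since $N(b_j)=\{a_j,\dots,a_m\}$.
\end{itemize}

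Then I will substitute into the formula vertex by vertex.
\begin{itemize}
\item For $x$: $X=\binom{m-1}{2}+\sum_{i=1}^{m-1} i=\frac{(m-1)(m-2)}{2}+\frac{(m-1)m}{2}=(m-1)^2$.
\item For $a_i$ with $i\le m-1$: $A_i=\binom{i+1}{2}+(m-2)+\sum_{j=1}^{i}(m-j)$. The sum equals $im-\frac{i(i+1)}{2}$, so the binomial terms cancel and $A_i=m(i+1)-2$.
\item For $a_m$: $A_m=\binom{m}{2}+\sum_{j=1}^{m}(m-j)=m(m-1)$.
\item For $b_j$: $B_j=\binom{m-j+1}{2}+\sum_{i=j}^{m-1} i+(m-1)$, where the last summand comes from $a_m$. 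Since $\sum_{i=j}^{m-1} i=\frac{(m-j)(m+j-1)}{2}$, the first two terms combine to $\frac{(m-j)\cdot 2m}{2}=m(m-j)$. Therefore $B_j=m(m-j+1)-1$.
\end{itemize}

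None of these steps is deep. The only point needing care is justifying that every $P_3$ is induced, which is exactly the bipartiteness (triangle-freeness) argument above. The rest is arithmetic; the one place a term is easily dropped is the separate treatment of $a_m$, which is not adjacent to $x$, in both $A_m$ and $B_j$.
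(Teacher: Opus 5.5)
Your proposal is correct and is essentially the paper's argument: the paper also splits $\mathcal{P}_3(G(m,3),v)$ according to whether $v$ is the centre or an end-vertex (the end-vertex case further split by the centre), and its subfamily counts regroup exactly into your $\binom{d(v)}{2}+\sum_{u\in N(v)}(d(u)-1)$. Your bipartiteness remark is a useful addition, since it explicitly justifies why every listed path is induced, which the paper leaves implicit.
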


\begin{proof}
Clearly, the $[P_3]$-degree of a vertex $v$ in $G(m,3)$ is obtained by summing the cardinalities of disjoint subfamilies into which $\mathcal{P}_3(G(m,3), v)$ decomposes.
\medskip

\noindent \textit{Computation of $X$.} For the vertex $x$, this decomposition takes the form $\mathcal{X}_1 \sqcup \mathcal{X}_2$ where
\begin{align*}
	\mathcal{X}_1 &= \bigl\{ x a_i b_j \bigm\vert 1 \le j \le i \le m-1 \bigr\}, & \bigl|\mathcal{X}_1\bigr| &= \frac{1}{2}m(m-1); \\[1mm]
	\mathcal{X}_2 &= \bigl\{ a_i x a_k \bigm\vert 1 \le i < k \le m-1 \bigr\}, & \bigl|\mathcal{X}_2\bigr| &= \frac{1}{2}(m-1)(m-2).
\end{align*}
Hence, $X = |\mathcal{X}_1| + |\mathcal{X}_2| = (m-1)^2$.
\medskip 

\noindent \textit{Computation of $A_i$.} For $i=m$, the family $\mathcal{P}_3(G(m,3), a_m)$ splits into $\mathcal{A}_{m,1} \sqcup \mathcal{A}_{m,2}$ where
\begin{align*}
	\mathcal{A}_{m,1} &= \bigl\{ a_i b_j a_m \bigm\vert 1 \le j \le i \le m-1 \bigr\}, & \bigl|\mathcal{A}_{m,1}\bigr| &= \frac{1}{2}m(m-1); \\[1mm]
	\mathcal{A}_{m,2} &= \bigl\{ b_j a_m b_k \bigm\vert 1 \le j < k \le m \bigr\}, & \bigl|\mathcal{A}_{m,2}\bigr| &= \frac{1}{2}m(m-1).
\end{align*}
Consequently, $A_m = |\mathcal{A}_{m,1}| + |\mathcal{A}_{m,2}| = m(m-1)$.
\medskip

\noindent Now fix $1 \le i \le m-1$. The family $\mathcal{P}_3(G(m,3), a_i)$ partitions into $\bigsqcup_{t=1}^{4} \mathcal{A}_{i,t}$ where 
\begin{align*}
	\mathcal{A}_{i,1} &= \bigl\{ a_i x a_k \bigm\vert 1 \le k \le m-1, \, k \neq i \bigr\}, & \bigl|\mathcal{A}_{i,1}\bigr| &= m-2; \\
	\mathcal{A}_{i,2} &= \bigl\{ a_i b_j a_k \bigm\vert 1 \le j \le i, \, j \le k \le m, \, k \neq i \bigr\}, & \bigl|\mathcal{A}_{i,2}\bigr| &= \sum_{j=1}^{i}(m-j) = \frac{1}{2}i(2m-i-1); \\
	\mathcal{A}_{i,3} &= \bigl\{ x a_i b_j \bigm\vert 1 \le j \le i \bigr\}, & \bigl|\mathcal{A}_{i,3}\bigr| &= i; \\
	\mathcal{A}_{i,4} &= \bigl\{ b_j a_i b_k \bigm\vert 1 \le j < k \le i \bigr\}, & \bigl|\mathcal{A}_{i,4}\bigr| &= \frac{1}{2}i(i-1).
\end{align*}
Straightforward calculation shows that $A_i = \sum_{t=1}^{4} |\mathcal{A}_{i,t}| = m(i+1)-2$.
\bigskip

\noindent \textit{Computation of $B_j$.} For any $1 \le j \le m$, we have $\mathcal{P}_3(G(m,3), b_j)=\bigsqcup_{t=1}^{3} \mathcal{B}_{j,t}$ where
\begin{align*}
	\mathcal{B}_{j,1} &= \bigl\{ x a_i b_j \bigm\vert j \le i \le m-1 \bigr\}, \quad
	\mathcal{B}_{j,2} = \bigl\{ a_i b_j a_k \bigm\vert j \le i < k \le m \bigr\}, \\[1mm]
	\mathcal{B}_{j,3} &= \bigl\{ b_j a_i b_k \bigm\vert j \le i \le m, \, 1 \le k \le i, \, k \neq j \bigr\},
\end{align*}
with the respective cardinalities 
\begin{align*}								
	&|\mathcal{B}_{j,1}| = m-j, \qquad  |\mathcal{B}_{j,2}| = \frac{1}{2}(m-j+1)(m-j), \\
	&|\mathcal{B}_{j,3}| = \sum_{i=j}^{m}(i-1) = \frac{1}{2}(m-j+1)(m+j-2). 
\end{align*}
Summing these values yields $B_j = \sum_{t=1}^{3} |\mathcal{B}_{j,t}| = m(m-j+1)-1$.	
\end{proof}

\begin{theorem}\label{t3}
	For every integer $m \ge 4$, the graph $G(m,3)$ is $[P_3]$-irregular.
\end{theorem}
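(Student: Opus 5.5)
The plan is to read off all $2m+1$ values from Lemma~\ref{l1} and separate them by their residues modulo $m$. No further structural argument about $G(m,3)$ should be needed, since Lemma~\ref{l1} already gives every $[P_3]$-degree explicitly.

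First I reduce each formula modulo $m$:
\begin{align*}
	A_i &= m(i+1)-2 \equiv -2 \pmod m \quad (1 \le i \le m-1), \\
	B_j &= m(m-j+1)-1 \equiv -1 \pmod m \quad (1 \le j \le m), \\
	A_m &= m(m-1) \equiv 0 \pmod m, \\
	X &= (m-1)^2 = m^2-2m+1 \equiv 1 \pmod m.
\end{align*}
The residues $0$, $1$, $-1$, $-2$ are pairwise distinct modulo $m$ exactly when $m$ does not divide any of the differences $1,2,3$. This holds because $m \ge 4$, and it is the only point where that hypothesis enters; for $m=3$, for instance, $1 \equiv -2$. Hence vertices from different classes among $\{x\}$, $\{a_m\}$, $\{a_1,\dots,a_{m-1}\}$, $\{b_1,\dots,b_m\}$ always have different $[P_3]$-degrees.

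It remains to compare degrees within each class. The map $i \mapsto A_i = m(i+1)-2$ is strictly increasing for $1 \le i \le m-1$. The map $j \mapsto B_j = m(m-j+1)-1$ is strictly decreasing for $1 \le j \le m$. So the values inside each of these two classes are pairwise distinct, and the other two classes are singletons.

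Therefore all $2m+1$ $[P_3]$-degrees are pairwise distinct, and $G(m,3)$ is $[P_3]$-irregular. Since Lemma~\ref{l1} is assumed, there is no real obstacle here. The only step needing care is checking that the four residues are distinct, which is precisely where $m \ge 4$ is used.
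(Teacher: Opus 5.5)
Your proposal is correct and is essentially the paper's proof. Both arguments reduce the degrees from Lemma~\ref{l1} modulo $m$ into the four classes $X \equiv 1$, $A_m \equiv 0$, $A_i \equiv -2$, $B_j \equiv -1$, and then use the strict monotonicity of $\{A_i\}$ and $\{B_j\}$ to separate values within a class. Your explicit check that these residues are distinct because $m \nmid 1,2,3$ for $m \ge 4$ spells out a step the paper leaves implicit.
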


\begin{proof}
	Fix an integer $m \ge 4$. Modulo $m$, the vertex $[P_3]$-degrees in $G(m,3)$ obtained in Lemma~\ref{l1} split into four disjoint congruence classes
	\begin{align*}
		X &\equiv 1 \pmod m, \\
		A_m &\equiv 0 \pmod m, \\
		A_i &\equiv m-2 \pmod m \quad \text{for } 1 \le i \le m-1, \\[1mm]
		B_j &\equiv m-1 \pmod m \quad \text{for } 1 \le j \le m.
	\end{align*}	
	This partition and the strict monotonicity of the sequences $\bigl\{A_i\bigr\}_{i=1}^{m-1}$ and $\bigl\{B_j\bigr\}_{j=1}^m$ imply that all these $[P_3]$-degrees are pairwise distinct. Thus, $G(m,3)$ is $[P_3]$-irregular.
\end{proof}

\begin{corollary}\label{c1}
	There exist infinitely many $[P_3]$-irregular graphs.
\end{corollary}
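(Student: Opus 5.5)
The corollary follows directly from Theorem~\ref{t3}; the only remaining work is to check that the graphs it supplies are pairwise distinct. I would argue as follows.

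By Theorem~\ref{t3}, for every integer $m \ge 4$ the graph $G(m,3)$ is $[P_3]$-irregular. By its definition, $G(m,3)$ has vertex set $\{a_1,\dots,a_m\}\cup\{b_1,\dots,b_m\}\cup\{x\}$, so its order is $|G(m,3)| = 2m+1$.

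The map $m \mapsto 2m+1$ is injective. Hence for distinct $m, m' \ge 4$ the graphs $G(m,3)$ and $G(m',3)$ have different orders, and so they are non-isomorphic. It follows that $\bigl\{G(m,3)\bigr\}_{m\ge 4}$ is an infinite family of pairwise non-isomorphic $[P_3]$-irregular graphs, which proves the corollary.

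Each $G(m,3)$ is also non-trivial, since its order is at least $9$. This matches the odd orders $k \ge 9$ announced in the subsection title.

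There is no real obstacle here. All the substantive work, namely computing the degrees in Lemma~\ref{l1} and separating them via congruence classes modulo $m$, is already done in Theorem~\ref{t3}.
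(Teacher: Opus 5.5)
Your proposal is correct and matches the paper, which states the corollary without separate proof as an immediate consequence of Theorem~\ref{t3} applied to the family $\bigl\{G(m,3)\bigr\}_{m \ge 4}$. Your observation that the orders $2m+1$ are pairwise distinct, so the graphs are pairwise non-isomorphic, is exactly the step the paper leaves implicit.
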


\subsection{Proof of Theorem~\ref{t2}}

Recall that Theorem~\ref{t2} states that a non-trivial \mbox{$[P_3]$-irregular} graph of order $k$ exists if and only if $k \ge 7$.

\begin{proof}
\noindent\textbf{Necessity.} To begin with, every $[P_3]$-irregular graph is inherently asymmetric, i.e., 
its automorphism group is trivial. According to~\cite{r5}, there are no 
asymmetric graphs of orders $2 \le k \le 5$, and there are exactly eight such graphs of order $6$, as shown in Figure~\ref{fig2}. A direct check reveals that none of them is $[P_3]$-irregular; there, in each graph, two vertices with the same $[P_3]$-degree are marked with their equal values. Consequently, no $[P_3]$-irregular graph exists 
for any order $2 \le k \le 6$. The necessity is proved.
	
\begin{figure}[ht]
	\centering
	\includegraphics[width=11cm]{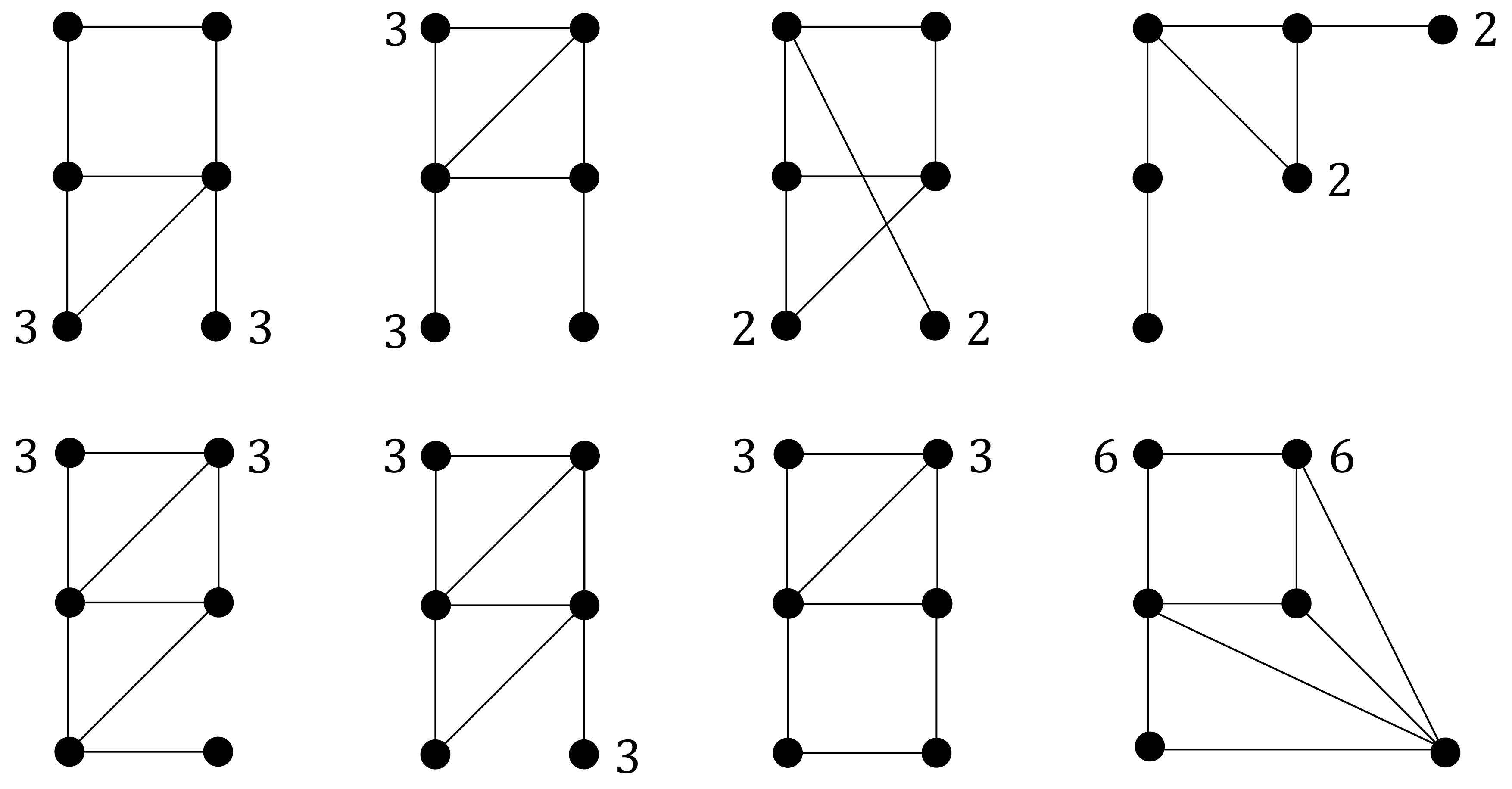}
	\caption{\small The eight asymmetric graphs of order $6$, in each of which two vertices are numerically labeled with their equal $[P_3]$-degree values.}
	\label{fig2}
\end{figure}
			
	\medskip 
	\noindent\textbf{Sufficiency.} By Theorem~\ref{t3}, for each integer $m \ge 4$, the graph $G(m,3)$ of odd order $k = 2m+1 \ge 9$ is $[P_3]$-irregular, and the exact formulae in Lemma~\ref{l1} ensure that all its vertex $[P_3]$-degrees are strictly positive. To complete the odd orders, Figure~\ref{fig3} depicts the $[P_3]$-irregular graph $G(3,3)$ of order $7$. It contains exactly $16$ induced subgraphs isomorphic to $P_3$: 
	$v_1v_2v_3$, $v_1v_2v_6$, $v_1v_2v_7$, $v_1v_5v_3$, $v_1v_5v_6$; 
	$v_2v_1v_5$, $v_2v_3v_4$, $v_2v_3v_5$, $v_2v_6v_5$; 
	$v_3v_2v_6$, $v_3v_5v_6$, $v_3v_7v_6$; 
	$v_4v_3v_5$, $v_4v_3v_7$; $v_5v_3v_7$, and $v_5v_6v_7$. 
	Consequently, the $[P_3]$-degrees of the vertices $v_1, \dots, v_7$ in $G(3,3)$ are $6, 8, 10, 3, 9, 7$, and $5$, respectively, which are also strictly positive.
	\begin{figure}[ht]
		\centering
		\includegraphics[width=4cm]{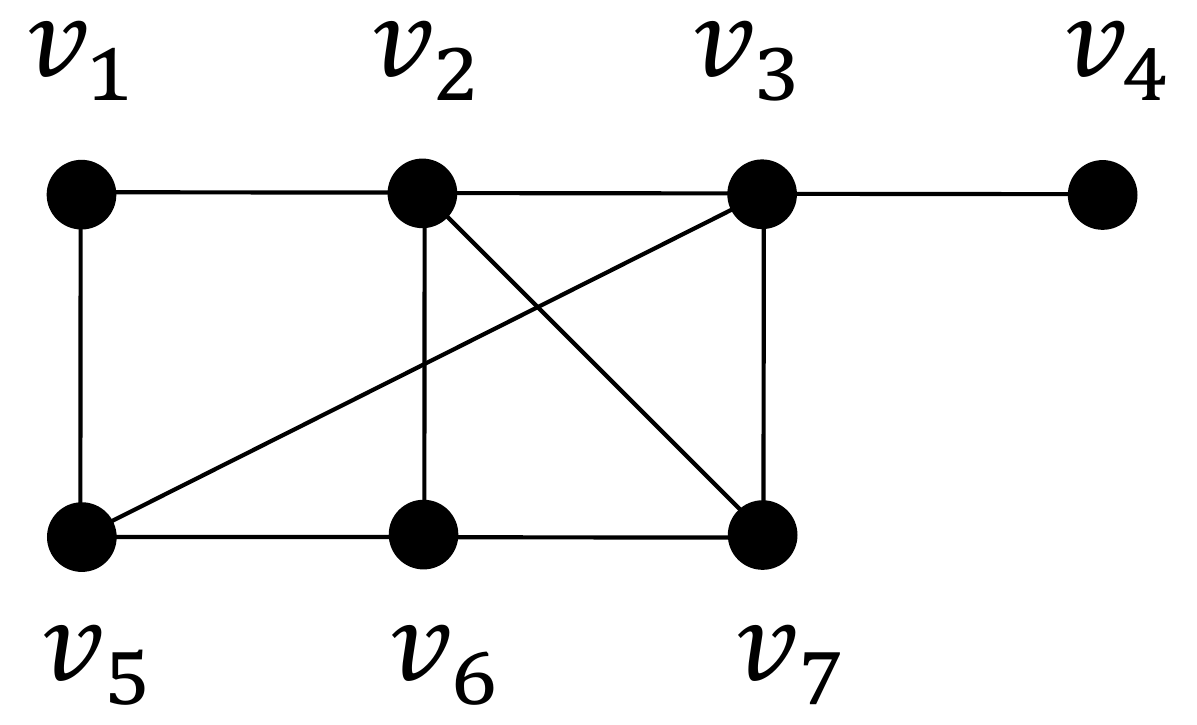}
		\caption{\small $[P_3]$-irregular graph $G(3,3)$ of order $7$.}
		\label{fig3}
	\end{figure}

A $[P_3]$-irregular graph of every even order $k \ge 8$ can be constructed by adjoining an isolated vertex to $G\left(\frac{1}{2}(k-2),3\right)$, thereby completing the proof of sufficiency.
\end{proof}

\section{$[P_4]$-irregular graphs}
In this section, we construct \mbox{$[P_4]$-irregular} graphs of every order $k \ge 17$, focusing first on the case of odd $k$.

\begin{definition}
	Let $m \ge 7$ be an integer. The graph $G(m,4)$ of odd order $2m+3$, depicted in Figure~\ref{fig4}, is characterized by the vertex and edge sets
	\begin{align*}
		V \bigl(G(m,4)\bigr) &= \{a_1, \dots, a_m\} \cup \{b_1, \dots, b_m\} \cup \{x_1, x_2, y_1\}, \\[1mm]
		E \bigl(G(m,4)\bigr) &= \bigl\{ a_i a_j \bigm\vert 1 \le i < j \le m \bigr\} \cup \bigl\{ a_i b_j \bigm\vert 1 \le j \le i \le m \bigr\} \\[1mm]
		&\quad \cup \bigl\{ x_2 a_i \bigm\vert 1 \le i \le m \bigr\} \cup \{x_1 x_2, \, b_1 y_1, \, b_m y_1\}.
	\end{align*}
\end{definition}

\begin{figure}[ht]
	\centering
	\includegraphics[width=7cm]{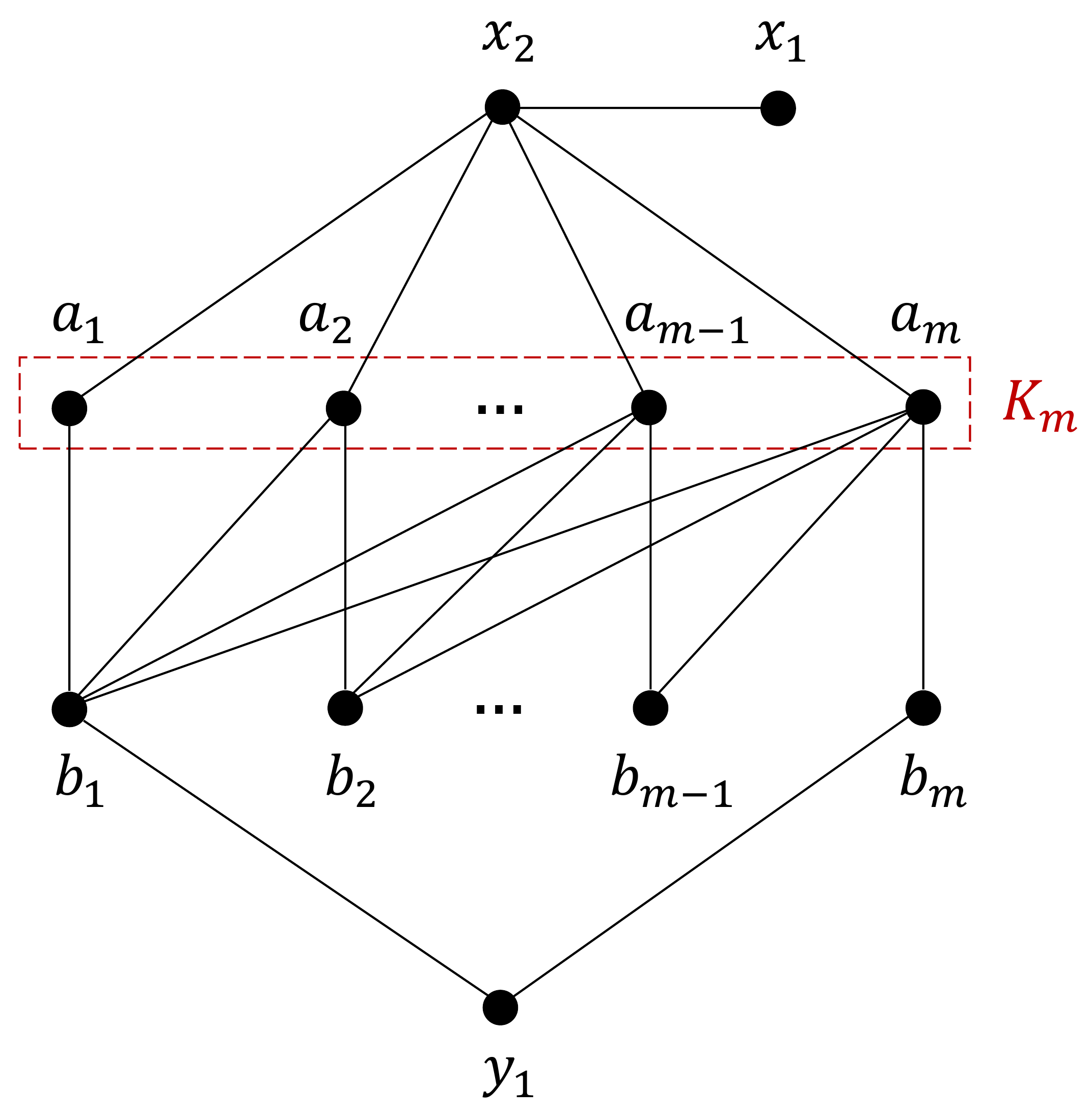}
	\caption{\small Graph $G(m,4)$.}
	\label{fig4}
\end{figure}

For the graph $G(m,4)$, let $X_1$, $X_2$, $A_i$, $B_j$, and $Y_1$ denote the $[P_4]$-degrees of the vertices $x_1$, $x_2$, $a_i$, $b_j$, and $y_1$, respectively, where $1 \le i, j \le m$.

\begin{lemma}\label{l2}
	The vertex $[P_4]$-degrees in $G(m,4)$ are given by
	\begin{align*}
		X_1 &= \frac{1}{2}m(m+1), &
		X_2 &= \frac{1}{2}m(m+1) + m + 1, \\[1mm]
		A_i &= 2i + 2 \quad \text{for } 1 \le i \le m - 1, &
		A_m &= 4m - 3, \\[1mm]
		B_j &= 2m - 2j + 3 \quad \text{for } 2 \le j \le m - 1, &
		B_1 &= \frac{1}{2}m(m+1) + 2m - 2, \\[1mm]
		B_m &= 3m - 2, &
		Y_1 &= \frac{1}{2}m(m+1) + 3m - 4.
	\end{align*}
\end{lemma}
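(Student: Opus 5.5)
The plan is a direct count, organized by how an induced $P_4$ meets the clique. Put $Q=\{x_2,a_1,\dots,a_m\}$. It induces $K_{m+1}$ in $G(m,4)$, since the $a_i$ are pairwise adjacent and $x_2$ is adjacent to every $a_i$. The remaining vertices $x_1,b_1,\dots,b_m,y_1$ span only the two edges $b_1y_1$ and $b_my_1$, and there is no edge between $x_1$ and any $b_j$ or $y_1$.

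An induced $P_4$ contains at most two vertices of $Q$, and if it contains two they are consecutive on the path. An induced $P_4$ with no vertex of $Q$ would have to lie in the graph spanned by $x_1,b_1,\dots,b_m,y_1$, which has only the path $b_1y_1b_m$ of order $3$. So every induced $P_4$ meets $Q$ in one vertex or in one edge. In each case I will use the nested neighbourhoods: $b_j$ is adjacent to $a_i$ iff $j\le i$, so $b_1$ sees every $a_i$ while $b_m$ sees only $a_m$.

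\emph{Step 1: list the families of induced $P_4$'s.}

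\emph{Edge $x_2a_i$ in the path.}
\begin{itemize}
\item $x_1x_2a_ib_j$ with $j\le i$. There are $\sum_i i=\tfrac12 m(m+1)$ of these.
\item $x_2a_ib_jy_1$ with $j\in\{1,m\}$ and $j\le i$. There are $m+1$ of these.
\item Nothing else: any further $a_k$ is adjacent to both $x_2$ and $a_i$.
\end{itemize}

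\emph{Edge $a_ka_i$ in the path.}
\begin{itemize}
\item A path $b\,a\,a\,b$ is impossible. If $b_j$ sees $a_i$ but not $a_k$, and $b_l$ sees $a_k$ but not $a_i$, then $k<j\le i<l\le k$, a contradiction.
\item The only possibility is $a_ka_mb_my_1$ with $k<m$. There are $m-1$ of these.
\end{itemize}

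\emph{A single vertex $a_i$ of $Q$.} Here the path alternates through $b$'s and $y_1$.
\begin{itemize}
\item $b_ja_ib_1y_1$ with $2\le j\le i$ (so $2\le j\le m-1$ once $i\le m-1$; the case $j=m$ is excluded because $b_m$ sees $y_1$). For each $j$ there are $m-j+1$ choices of $i$.
\item $b_ja_mb_my_1$ with $2\le j\le m-1$.
\item $a_ib_1y_1b_m$ with $i\le m-1$. There are $m-1$ of these.
\end{itemize}

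\emph{Step 2: prove the list is complete.} For each end-vertex type I will check that no other extension is induced. Usually this is because an extra $a$-vertex is adjacent to two path vertices. Two specific cases also need ruling out: a path $b_ja_ib_l$ with $b_j=b_m$ and $b_l=b_1$, which closes the cycle $b_ma_mb_1y_1$; and a path $a_kb_ja_i$, which fails because $a_k$ is adjacent to $a_i$.

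\emph{Step 3: sum per vertex.} Reading off memberships gives the stated values.
\begin{itemize}
\item $X_1$ uses only the first family.
\item $X_2=\tfrac12 m(m+1)+(m+1)$.
\item For $2\le j\le m-1$, $B_j=(m-j+1)+(m-j+1)+1=2m-2j+3$.
\item $Y_1=(m+1)+(m-1)+\sum_{j=2}^{m-1}(m-j+2)+(m-1)$, which simplifies to $\tfrac12 m(m+1)+3m-4$.
\item $B_1$, $B_m$ and $A_i$ are obtained similarly. For $A_i$ with $i<m$ there are $i$ paths of the form $x_1x_2a_ib_j$, $1$ of the form $x_2a_ib_1y_1$, $1$ of the form $a_ib_1y_1b_m$, $i-1$ of the form $b_ja_ib_1y_1$, and $1$ of the form $a_ia_mb_my_1$. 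These total $2i+2$.
\end{itemize}

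I expect Step 2, the completeness of the case analysis, to be the main obstacle, especially around $y_1$ and the boundary vertices $b_1$ and $b_m$. I would handle it by fixing the set $Q\cap V(P)$ first and then enumerating the admissible non-$Q$ neighbours.
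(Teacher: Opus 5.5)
Your proposal is correct and is essentially the paper's direct count. Your families coincide with the paper's subfamilies: $\mathcal{X}_{2,2}$ is your $x_2a_ib_1y_1$, $\mathcal{B}_{1,3}$ is your $a_ib_1y_1b_m$, $\mathcal{B}_{1,4}$ is your $b_ja_ib_1y_1$, and $\mathcal{A}_{m,3}$ is the union of your $x_2a_mb_my_1$, $a_ka_mb_my_1$ and $b_ja_mb_my_1$. The sums you leave as ``similar'' also come out right: $A_m=4m-3$, $B_1=\tfrac12m(m+1)+2m-2$ and $B_m=3m-2$.

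What you add is the clique argument for $Q=\{x_2,a_1,\dots,a_m\}$, which supplies the completeness justification the paper leaves implicit. You should also state the case where the path meets $Q$ only in $x_2$. Such a path has at most two vertices, since $x_1$ is a leaf.
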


\begin{proof}
The $[P_4]$-degrees in $G(m,4)$ are determined analogously to the proof of Lemma~\ref{l1} using $U = \{a_1, \dots, a_{m-1}\} \cup \{b_2, \dots, b_{m-1}\} \cup \{x_2\}$.
\medskip

\noindent \textit{Computation of $X_1, X_2$.} For $x_1$, we have $\mathcal{P}_4(G(m,4), x_1) = \bigl\{x_1x_2a_ib_j \bigm\vert 1 \le j \le i \le m \bigr\}$, which yields $X_1 = \displaystyle \frac{1}{2}m(m+1)$. 
\medskip
	
\noindent For $x_2$, $\mathcal{P}_4(G(m,4), x_2)$ partitions into $\bigsqcup_{t=1}^{3} \mathcal{X}_{2,t}$, where
	\begin{align*}
		\mathcal{X}_{2,1} &= \mathcal{P}_4(G(m,4), x_1), & |\mathcal{X}_{2,1}| &= X_1=\frac{1}{2}m(m+1); \\
		\mathcal{X}_{2,2} &= \bigl\{x_2a_ib_1y_1 \bigm\vert 1 \le i \le m\bigr\}, & |\mathcal{X}_{2,2}| &= m; \\[1mm]
		\mathcal{X}_{2,3} &= \{x_2a_mb_my_1\}, & |\mathcal{X}_{2,3}| &= 1.
	\end{align*}
	Consequently, $X_2 = \sum_{t=1}^{3} |\mathcal{X}_{2,t}| = \displaystyle\frac{1}{2}m(m+1) + m + 1$.
	\bigskip
	
	\noindent \textit{Computation of $A_i$.} Fix $1 \le i \le m-1$. Then $\mathcal{P}_4(G(m,4), a_i)=\bigsqcup_{t=1}^{3} \mathcal{A}_{i,t}$, where
	\begin{align*}
		\mathcal{A}_{i,1} &= \bigl\{x_1x_2a_ib_j \bigm\vert 1 \le j \le i\bigr\}, & |\mathcal{A}_{i,1}| &= i; \\[1mm]
		\mathcal{A}_{i,2} &= \bigl\{b_j a_i b_1 y_1 \bigm\vert 2 \le j \le i\bigr\}, & |\mathcal{A}_{i,2}| &= i - 1; \\[1mm]
		\mathcal{A}_{i,3} &= \{x_2a_ib_1y_1, \, a_ib_1y_1b_m, \, a_ia_mb_my_1\}, & |\mathcal{A}_{i,3}| &= 3. 
	\end{align*}
	It follows that $A_i = \sum_{t=1}^{3} |\mathcal{A}_{i,t}| = 2i + 2$.
	\bigskip
	
	\noindent For $i = m$, the family $\mathcal{P}_4(G(m,4), a_m)$ decomposes into $\bigsqcup_{t=1}^{3} \mathcal{A}_{m,t}$, where
	\begin{align*}
		\mathcal{A}_{m,1} &= \bigl\{x_1x_2a_mb_j \bigm\vert 1 \le j \le m\bigr\}, & |\mathcal{A}_{m,1}| &= m; \\[1mm]
		\mathcal{A}_{m,2} &= \bigl\{v a_m b_1 y_1 \bigm\vert v \in \{x_2\} \cup \{b_2, \dots, b_{m-1}\}\bigr\}, & |\mathcal{A}_{m,2}| &= m - 1; \\[1mm]
		\mathcal{A}_{m,3} &= \bigl\{u a_m b_m y_1 \bigm\vert u \in U\bigr\}, & |\mathcal{A}_{m,3}| &= 2m - 2.
	\end{align*}
	This leads to $A_m = \sum_{t=1}^{3} |\mathcal{A}_{m,t}| = 4m - 3$.
	\bigskip
	
	\noindent \textit{Computation of $B_j$.} For $j = 1$, we obtain $\mathcal{P}_4(G(m,4), b_1) = \bigsqcup_{t=1}^{4} \mathcal{B}_{1,t}$, where
	\begin{align*}
		\mathcal{B}_{1,1} &= \bigl\{x_1x_2a_ib_1 \bigm\vert 1 \le i \le m\bigr\}, & |\mathcal{B}_{1,1}| &= m; \\[1mm]
		\mathcal{B}_{1,2} &= \mathcal{X}_{2,2}, & |\mathcal{B}_{1,2}| &= m; \\[1mm]
		\mathcal{B}_{1,3} &= \bigl\{a_ib_1y_1b_m \bigm\vert 1 \le i \le m-1\bigr\}, & |\mathcal{B}_{1,3}| &= m-1; \\
		\mathcal{B}_{1,4} &= \bigl\{b_j a_i b_1 y_1 \bigm\vert 2 \le j \le i \le m, \, j \neq m\bigr\}, & |\mathcal{B}_{1,4}| &= \displaystyle\frac{1}{2}m(m-1)-1.
	\end{align*}
	Thus, $B_1 = \sum_{t=1}^{4} |\mathcal{B}_{1,t}| = \displaystyle\frac{1}{2}m(m+1) + 2m - 2$.
	\bigskip
	
	\noindent Fix $2 \le j \le m-1$. We express $\mathcal{P}_4(G(m,4), b_j)$ as the disjoint union $\bigsqcup_{t=1}^{3} \mathcal{B}_{j,t}$, where
	\begin{align*}
		\mathcal{B}_{j,1} &= \bigl\{x_1x_2a_ib_j \bigm\vert j \le i \le m\bigr\}, & |\mathcal{B}_{j,1}| &= m - j + 1; \\[1mm]
		\mathcal{B}_{j,2} &= \bigl\{b_ja_ib_1y_1 \bigm\vert j \le i \le m\bigr\}, & |\mathcal{B}_{j,2}| &= m - j + 1; \\[1mm]
		\mathcal{B}_{j,3} &= \{b_ja_mb_my_1\}, & |\mathcal{B}_{j,3}| &= 1.
	\end{align*}
	Hence, $B_j = \sum_{t=1}^{3} |\mathcal{B}_{j,t}| = 2m - 2j + 3$.
	\bigskip
	
	\noindent Finally, in the case $j = m$, the relation $\mathcal{P}_4(G(m,4), b_m) = \{x_1x_2a_mb_m\} \sqcup \mathcal{A}_{m,3} \sqcup \mathcal{B}_{1,3}$ implies $B_m = 1 + (2m - 2) + (m - 1) = 3m - 2$. 
	\bigskip
	
	\noindent \textit{Computation of $Y_1$.} Since $\mathcal{P}_4(G(m,4), y_1) = \mathcal{X}_{2,2} \sqcup \mathcal{A}_{m,3} \sqcup \mathcal{B}_{1,3} \sqcup \mathcal{B}_{1,4}$, we conclude that
	\[Y_1 = m + (2m - 2) + (m - 1) + \frac{1}{2}m(m-1)-1 = \frac{1}{2}m(m+1) + 3m - 4. \qedhere \]
\end{proof}

\begin{theorem}\label{t4}
	For every integer $m \ge 7$, the graph $G(m,4)$ is $[P_4]$-irregular.
\end{theorem}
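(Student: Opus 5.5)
Taking the exact values from Lemma~\ref{l2} as given, the proof reduces to showing that these $2m+3$ integers are pairwise distinct for every $m\ge 7$. I would split them into a ``small'' group, which is linear in $m$, and a ``large'' group, which is quadratic in $m$.

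\textbf{Small group.} It consists of $A_1,\dots,A_{m-1}$, $A_m$, $B_2,\dots,B_{m-1}$ and $B_m$.
\begin{itemize}
\item $A_i=2i+2$ for $1\le i\le m-1$ is strictly increasing. Its values are the even numbers in $[4,2m]$.
\item $B_j=2m-2j+3$ for $2\le j\le m-1$ is strictly decreasing. Its values are the odd numbers in $[5,2m-1]$.
\item The two sequences differ in parity, so they share no value.
\item $A_m=4m-3$ and $B_m=3m-2$ exceed $2m$ once $m\ge 3$. They differ from each other because $4m-3=3m-2$ forces $m=1$.
\end{itemize}
So the small group is pairwise distinct, and its maximum is $4m-3$.

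\textbf{Large group.} It consists of $X_1$, $X_2$, $B_1$ and $Y_1$. Write $T=\tfrac12 m(m+1)$. These values are
\[
T,\qquad T+m+1,\qquad T+2m-2,\qquad T+3m-4.
\]
The offsets $0,\ m+1,\ 2m-2,\ 3m-4$ are strictly increasing as soon as $m+1<2m-2$, that is, for $m\ge 4$. Hence the large group is pairwise distinct.

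\textbf{Separating the groups.} It suffices to show that the smallest large value exceeds the largest small value, namely $T>4m-3$. This is equivalent to $m^2+m>8m-6$, i.e.\ $m^2-7m+6>0$, i.e.\ $(m-1)(m-6)>0$. This holds exactly for $m\ge 7$, which explains the hypothesis $m\ge 7$. Together with the two previous steps, all $[P_4]$-degrees in $G(m,4)$ are pairwise distinct, so $G(m,4)$ is $[P_4]$-irregular.

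There is no real obstacle here; all the difficulty sits in Lemma~\ref{l2}. The only point requiring care is the comparison between the quadratic values and the linear maximum $A_m$, which is where the threshold $m\ge7$ is needed.
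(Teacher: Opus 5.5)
Your proof is correct and follows the paper's own argument: the paper likewise separates $\{A_i\}_{i=1}^{m-1}$ (even values, maximum $2m$) from $\{B_j\}_{j=2}^{m-1}$ (odd values, maximum $2m-1$) by parity, and then places the remaining degrees in the chain $2m < B_m < A_m < X_1 < X_2 < B_1 < Y_1$. Your factorization $(m-1)(m-6)>0$ for $X_1 > A_m$ makes explicit what the paper leaves implicit, namely that this comparison is where $m \ge 7$ is needed, since the formulas of Lemma~\ref{l2} give $X_1 = A_m = 21$ when $m=6$.
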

\begin{proof}
	Fix an integer $m \ge 7$. By Lemma~\ref{l2}, the sequence $\bigl\{A_i\bigr\}_{i=1}^{m-1}$ is strictly increasing with a maximum of $A_{m-1} = 2m$ and contains only even values, whereas $\bigl\{B_j\bigr\}_{j=2}^{m-1}$ is strictly decreasing with a maximum of $B_2 = 2m-1$ and comprises only odd values, ensuring that their terms are pairwise distinct. Moreover, we have the following chain of inequalities
	\[
	2m < B_m < A_m < X_1 < X_2 < B_1 < Y_1.
	\]
	These arguments collectively demonstrate that all vertex $[P_4]$-degrees in $G(m,4)$ are pairwise distinct, meaning that $G(m,4)$ is $[P_4]$-irregular.
\end{proof}

\begin{corollary}\label{c2}
	For every integer $k \ge 17$, there exists a $[P_4]$-irregular graph of order $k$.
\end{corollary}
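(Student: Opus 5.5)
Corollary~\ref{c2} can be proved the same way as the sufficiency part of Theorem~\ref{t2}: use the odd-order family $G(m,4)$ and pad with isolated vertices to reach even orders.

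\emph{Odd orders.} Let $k \ge 17$ be odd. Put $m = \frac{1}{2}(k-3)$. Then $k \ge 17$ gives $m \ge 7$, and $|G(m,4)| = 2m+3 = k$. By Theorem~\ref{t4}, $G(m,4)$ is $[P_4]$-irregular, so odd orders are handled directly.

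\emph{Even orders.} Let $k \ge 18$ be even. Take $m = \frac{1}{2}(k-4) \ge 7$, so that $|G(m,4)| = k-1$, and adjoin one isolated vertex $z$. Any induced $P_4$ is connected, so it cannot contain $z$. Hence $z$ has $[P_4]$-degree $0$, and the $[P_4]$-degrees of all old vertices are unchanged.

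It remains to check that no vertex of $G(m,4)$ has $[P_4]$-degree $0$. By Lemma~\ref{l2}:
\begin{itemize}
\item $A_i = 2i+2 \ge 4$ for $1 \le i \le m-1$;
\item $B_j = 2m-2j+3 \ge 5$ for $2 \le j \le m-1$;
\item $X_1, X_2, A_m, B_1, B_m, Y_1$ are all clearly positive for $m \ge 7$.
\end{itemize}
So every vertex of $G(m,4)$ has strictly positive $[P_4]$-degree. Therefore $G(m,4) \cup \{z\}$ is a $[P_4]$-irregular graph of order $k$.

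The only step needing care is this positivity check, which guarantees the new degree $0$ is not already taken; it follows immediately from the explicit formulae in Lemma~\ref{l2}. Together the two cases cover every $k \ge 17$.
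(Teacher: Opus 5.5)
Your proposal is correct and follows the paper's proof. Odd orders come directly from Theorem~\ref{t4} with $m=\frac{1}{2}(k-3)\ge 7$. Even orders come from adjoining an isolated vertex to $G\bigl(\frac{1}{2}(k-4),4\bigr)$, with the strict positivity of all $[P_4]$-degrees in Lemma~\ref{l2} guaranteeing that the new degree $0$ is not repeated.
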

\begin{proof}
	For odd orders, the existence follows directly from Theorem~\ref{t4}. For any even order $k \ge 18$, the desired graph is obtained by adding an isolated vertex to the \mbox{$[P_4]$-irregular} graph $G\bigl(\frac{1}{2}(k-4), 4\bigr)$ of odd order $k-1$ having strictly positive vertex  $[P_4]$-degrees by Lemma~\ref{l2}.
\end{proof}

\section{$[P_5]$-irregular graphs}
This section presents a construction of \mbox{$[P_5]$-irregular} graphs of every order $k \ge 19$, starting with odd values of $k$.
\begin{definition}
	Let $m \ge 7$ be an integer. The graph $G(m,5)$ of odd order $2m+5$, illustrated in Figure~\ref{fig5}, is defined by
	\begin{align*}
		V \bigl(G(m,5)\bigr) &= \{a_1, \dots, a_m\} \cup \{b_1, \dots, b_m\} \cup \{x_1, x_2, x_3, y_1, y_2\}, \\[1mm]
		E \bigl(G(m,5)\bigr) &= \bigl\{ a_i a_j \bigm\vert 1 \le i < j \le m \bigr\} \cup \bigl\{ a_i b_j \bigm\vert 1 \le j \le i \le m \bigr\} \\[1mm]
		&\quad \cup \bigl\{ x_3 a_i \bigm\vert 1 \le i \le m \bigr\} \cup \{x_1 x_2, \, x_2 x_3, \, b_1 y_2, \, b_m y_2, \, y_1 y_2\}.
	\end{align*}
\end{definition}
\vspace{-0.5cm}

\begin{figure}[ht]
	\centering
	\includegraphics[width=7cm]{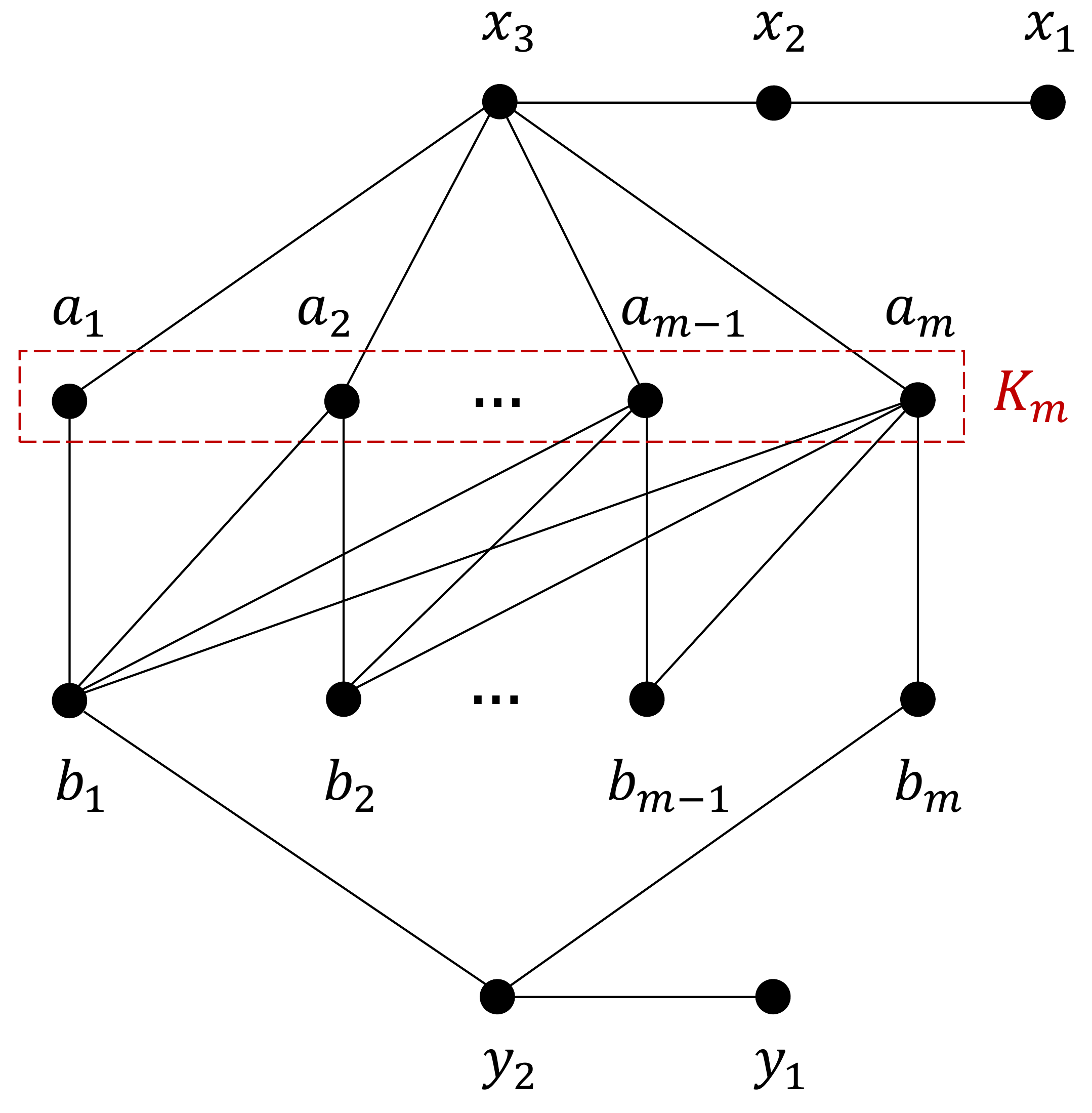}
	\caption{\small Graph $G(m,5)$.}
	\label{fig5}
\end{figure}

For the graph $G(m,5)$, let $X_1$, $X_2$, $X_3$, $A_i$, $B_j$, $Y_1$, and $Y_2$ be the $[P_5]$-degrees of the vertices $x_1$, $x_2$, $x_3$, $a_i$, $b_j$, $y_1$, and $y_2$, respectively, where $1 \le i, j \le m$.

\begin{lemma}\label{l3}
	The vertex $[P_5]$-degrees in $G(m,5)$ are given by
	\begin{align*}
		X_1 &= \frac{1}{2}m(m+1), &
		X_2 &= \frac{1}{2}m(m+1) + m + 1, \\
		X_3 &= \frac{1}{2}m(m+1) + 3m + 1, &
		A_i &= 3i + 2 \quad \text{for } 1 \le i \le m - 1, \\
		A_m &= 4m - 1, &
		B_1 &= m^2 + 2m - 1, \\
		B_j &= 3m - 3j + 3 \quad \text{for } 2 \le j \le m - 1, &
		B_m &= \frac{1}{2}m(m+1) + m, \\
		Y_1 &= \frac{1}{2}m(m+1) + 2m - 3, &
		Y_2 &= m^2 + 3m - 2.
	\end{align*}
\end{lemma}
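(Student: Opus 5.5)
We follow the scheme of Lemmas~\ref{l1} and~\ref{l2}. For each vertex $v$ of $G(m,5)$ we partition $\mathcal{P}_5(G(m,5),v)$ into explicitly described disjoint subfamilies and add their sizes.

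\textbf{Structural constraints.} The following facts, read off from the definition, restrict every induced $P_5$.
\begin{enumerate}
\item[(i)] The set $\{a_1,\dots,a_m\}$ is a clique, so an induced path contains at most two $a$'s, and if it contains two they are consecutive.
\item[(ii)] The $b$'s are pairwise non-adjacent. The neighbourhood of $b_j$ is $\{a_i \mid i\ge j\}$, together with $y_2$ when $j\in\{1,m\}$.
\item[(iii)] The vertex $x_3$ is adjacent to all $a$'s. Hence an induced path through $x_3$ and an $a$ contains exactly one $a$.
\item[(iv)] The pendant chain $x_1x_2x_3$ and the pendant edge $y_1y_2$ attach only through $x_3$ and through $y_2\in N(b_1)\cap N(b_m)$, respectively.
\end{enumerate}

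\textbf{Classification of induced $P_5$'s.} Using these facts, we classify all induced $P_5$'s by how many of the "gadget" vertices $x_1,x_2,x_3,y_1,y_2$ they contain.
\begin{enumerate}
\item[(a)] Paths $x_1x_2x_3a_ib_j$ with $j\le i$. There are $\frac12 m(m+1)$ of them, which gives $X_1$ at once.
\item[(b)] Paths $x_2x_3a_i\,b_1y_2$ and $x_2x_3a_mb_my_2$, and paths $x_3a_ib_1y_2y_1$, $x_3a_mb_my_2y_1$, together with variants $x_3a_ib_1y_2b_m$ where $a_i\not\sim b_m$, i.e.\ $i<m$.
\item[(c)] Paths through $y_2$ avoiding the $x$-chain. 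Examples are $b_ja_ib_1y_2y_1$ with $2\le j\le i$, $a_ib_1y_2b_m$ extended by $a_k$ or $y_1$, $u\,a_mb_my_2y_1$, and $a_ia_mb_my_2$-type paths extended at either end.
\item[(d)] Paths inside $\{a_i\}\cup\{b_j\}\cup\{x_3\}$. For instance, $b_ja_ia_kb_l$ is not induced when $i<k$, because $b_j\sim a_k$ whenever $j\le i<k$. This forces the pattern $b\,a\,b$ to continue only through $y_2$ or $x_3$, and such a part of the graph contributes no $P_5$ by itself.
\end{enumerate}

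\textbf{Vertex-by-vertex counting.} With the classification in hand, we compute each degree as in Lemma~\ref{l2}. For $X_2$ and $X_3$, and then for $Y_1$ and $Y_2$, we reuse earlier families as building blocks, just as $\mathcal{X}_{2,1}=\mathcal{P}_4(G(m,4),x_1)$ was reused there. Concretely, $\mathcal{P}_5(\cdot,x_2)\supseteq\mathcal{P}_5(\cdot,x_1)$, and $\mathcal{P}_5(\cdot,y_2)$ contains $\mathcal{P}_5(\cdot,y_1)$.

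For $a_i$ with $i<m$ and for $b_j$ with $2\le j\le m-1$, we expect a bounded number of "exceptional" paths through $a_m,b_m,b_1$, plus terms linear in $i$ or in $m-j$. This should explain the linear formulas $A_i=3i+2$ and $B_j=3m-3j+3$. The special vertices $a_m,b_1,b_m$ are handled separately. We introduce an auxiliary set analogous to $U$ in Lemma~\ref{l2}, namely the common non-neighbours of $b_m$ through which a path $u\,a_m b_m y_2 y_1$ can be formed. We then verify each closed form by summation.

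\textbf{Main obstacle.} The hardest part is completeness and disjointness of the classification, especially for $b_1$, $y_2$ and $B_m$. There, long paths such as $b_ja_ib_1y_2b_m$ (requiring $i<m$) and $a_ib_1y_2b_ma_m$ (requiring $a_i\not\sim a_m$, impossible) must be included or excluded carefully. To avoid missing cases, we would first enumerate paths by their central vertex, i.e.\ the third vertex, and then cross-check the totals. As a sanity check, we use the identity $\sum_v(\text{degree})=5\cdot|\{\text{induced }P_5\}|$ and test small $m$ by hand or computer (e.g.\ $m=7$).
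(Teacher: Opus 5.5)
There is a genuine gap. Apart from $X_1$ (and, implicitly, $X_2,X_3$ via your list (b)), the proposal never derives any of the stated formulas. The whole content of this lemma is an exhaustive, disjoint enumeration of the induced $P_5$'s through each vertex, and that is exactly the step you postpone (``we expect'', ``this should explain'', ``we would first enumerate'').

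Your facts (i)--(iv) and the observation in (d) are correct: no induced $P_5$ lies inside $\{a_1,\dots,a_m\}\cup\{b_1,\dots,b_m\}\cup\{x_3\}$. But (b)--(c) is only a list of examples with no completeness argument. Neither the identity $\sum_v|\mathcal{P}_5(G(m,5),v)|=5N$ nor a computer check at $m=7$ proves anything for all $m\ge 7$; the identity needs an independent count of the total number $N$ of induced $P_5$'s, which again requires the full classification.

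Some of your examples are also wrong, so carrying out the plan as written would miscount. The path $a_ib_1y_2b_m$ extends neither by an $a_k$ (every $a_k$ is adjacent to $b_1$) nor by $y_1$ ($y_2$ would get three path-neighbours). Its only extensions are $x_3a_ib_1y_2b_m$ and $b_ka_ib_1y_2b_m$ with $2\le k\le i\le m-1$. Similarly, $a_ia_mb_my_2$ with $i<m$ extends only by $y_1$ at the $y_2$-end. This is because $x_3$, every other $a$, every $b_k$ with $k\le i$, and $b_1$ are all adjacent to $a_m$.

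To close the gap you must finish the case split, as follows.
\begin{itemize}
\item If a path avoids $y_2$, it is $x_1x_2x_3a_ib_j$ with $j\le i$.
\item If it contains $x_2$ and $y_2$, it is $x_2x_3a_ib_1y_2$ or $x_2x_3a_mb_my_2$.
\item If it contains $x_3$ and $y_2$ but not $x_2$, then $x_3$ is an end, and the path is $x_3a_ib_1y_2y_1$, $x_3a_ib_1y_2b_m$ ($i<m$) or $x_3a_mb_my_2y_1$.
\item If it contains $y_2$ but not $x_3$, it is $y_1y_2b_1a_ib_k$ ($2\le k\le\min(i,m-1)$), $y_1y_2b_ma_mv$ ($v\in\{a_1,\dots,a_{m-1},b_2,\dots,b_{m-1}\}$) or $b_ka_ib_1y_2b_m$ ($2\le k\le i\le m-1$).
\end{itemize}
Each case needs its one-line justification; for example, after $x_3a_ib_j$ the only admissible continuation is $y_2$, which forces $j\in\{1,m\}$. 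From this global list every degree can be read off directly; for instance, $a_i$ with $i<m$ lies in $i+1+2+(i-1)+1+(i-1)=3i+2$ paths.

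Organised this way, your route would differ from the paper's. The paper writes down separate families $\mathcal{X}_{k,t}$, $\mathcal{A}_{i,t}$, $\mathcal{B}_{j,t}$ for each vertex and reuses them for $B_m$, $Y_1$, $Y_2$. A single global list has the advantage that completeness is argued once. Without that list, however, the proposal is a plan rather than a proof.
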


\begin{proof} Let $U = \{a_1, a_2, \dots, a_{m-1}\} \cup \{b_2, b_3, \dots, b_{m-1}\} \cup \{x_3\}$. 
\medskip
	
\noindent \textit{Computation of $X_1$, $X_2$, and $X_3$.} For $x_1$, $\mathcal{P}_5(G(m,5), x_1) = \{x_1x_2x_3a_i b_j \mid 1 \le j \le i \le m \}$, yielding $X_1 = \bigl|\mathcal{P}_5(G(m,5), x_1)\bigr| = \displaystyle\frac{1}{2}m(m+1)$. 
\medskip

\noindent For $x_2$, $\mathcal{P}_5(G(m,5), x_2) = \bigsqcup_{t=1}^{3} \mathcal{X}_{2,t}$ where
	\begin{align*}
		\mathcal{X}_{2,1} &= \mathcal{P}_5(G(m,5), x_1), & |\mathcal{X}_{2,1}| &= X_1= \frac{1}{2}m(m+1); \\
		\mathcal{X}_{2,2} &= \bigl\{x_2x_3a_i b_1y_2 \bigm\vert 1 \le i \le m\bigr\}, & |\mathcal{X}_{2,2}| &= m; \\[1mm]
		\mathcal{X}_{2,3} &= \{x_2x_3a_mb_my_2\}, & |\mathcal{X}_{2,3}| &= 1.
	\end{align*}
Then $X_2 = \sum_{t=1}^{3} |\mathcal{X}_{2,t}| = \displaystyle\frac{1}{2}m(m+1) + m + 1$.
\medskip
	
\noindent For $x_3$, $\mathcal{P}_5(G(m,5), x_3) = \bigsqcup_{t=1}^{4} \mathcal{X}_{3,t}$ where
	\begin{align*}
		\mathcal{X}_{3,1} &= \mathcal{P}_5(G(m,5), x_2), & |\mathcal{X}_{3,1}| &= X_2=\frac{1}{2}m(m+1) + m + 1; \\
		\mathcal{X}_{3,2} &= \bigl\{x_3a_ib_1y_2b_m \bigm\vert 1 \le i \le m-1\bigr\}, & |\mathcal{X}_{3,2}| &= m-1; \\[1mm]
		\mathcal{X}_{3,3} &= \bigl\{x_3a_ib_1y_2y_1 \bigm\vert 1 \le i \le m\bigr\}, & |\mathcal{X}_{3,3}| &= m; \\[1mm]
		\mathcal{X}_{3,4} &= \{x_3a_mb_my_2y_1\}, & |\mathcal{X}_{3,4}| &= 1,
	\end{align*}
resulting in $X_3 = \sum_{t=1}^{4} |\mathcal{X}_{3,t}| = \displaystyle\frac{1}{2}m(m+1) + 3m + 1$.
	\medskip
	
	\noindent \textit{Computation of $A_i$.} Now let $1 \le i \le m-1$. Then $\mathcal{P}_5(G(m,5), a_i)=\bigsqcup_{t=1}^{4} \mathcal{A}_{i,t}$ where
	\begin{align*}
		\mathcal{A}_{i,1} &= \bigl\{x_1x_2x_3a_ib_s \bigm\vert 1 \le s \le i\bigr\}, & |\mathcal{A}_{i,1}| &= i; \\[1mm]
		\mathcal{A}_{i,2} &= \{x_2x_3a_ib_1y_2, \, x_3a_ib_1y_2b_m, \, x_3a_ib_1y_2y_1, \, a_ia_mb_my_2y_1\}, & |\mathcal{A}_{i,2}| &= 4; \\[1mm]
		\mathcal{A}_{i,3} &= \bigl\{b_sa_ib_1y_2b_m \bigm\vert 2 \le s \le i\bigr\}, & |\mathcal{A}_{i,3}| &= i - 1; \\[1mm]
		\mathcal{A}_{i,4} &= \bigl\{b_sa_ib_1y_2y_1 \bigm\vert 2 \le s \le i\bigr\}, & |\mathcal{A}_{i,4}| &= i - 1.
	\end{align*}
	This leads to $A_i = \sum_{t=1}^{4} |\mathcal{A}_{i,t}| = 3i + 2$.
	\medskip
	
	\noindent For the boundary case $i = m$, $\mathcal{P}_5(G(m,5), a_m) = \bigsqcup_{t=1}^{4} \mathcal{A}_{m,t}$ where
	\begin{align*}
		\mathcal{A}_{m,1} &= \bigl\{x_1x_2x_3a_mb_j \bigm\vert 1 \le j \le m\bigr\}, & |\mathcal{A}_{m,1}| &= m; \\[1mm]
		\mathcal{A}_{m,2} &= \{x_2x_3a_mb_1y_2, \, x_2x_3a_mb_my_2, \, x_3a_mb_1y_2y_1\}, & |\mathcal{A}_{m,2}| &= 3; \\[1mm]
		\mathcal{A}_{m,3} &= \bigl\{u a_m b_m y_2 y_1 \bigm\vert u \in U \bigr\}, & |\mathcal{A}_{m,3}| &= 2m - 2; \\[1mm]
		\mathcal{A}_{m,4} &= \bigl\{b_sa_mb_1y_2y_1 \bigm\vert 2 \le s \le m-1\bigr\}, & |\mathcal{A}_{m,4}| &= m - 2.
	\end{align*}
	Consequently, $A_m = \sum_{t=1}^{4} |\mathcal{A}_{m,t}| = 4m - 1$.
	\medskip
	
	\noindent \textit{Computation of $B_j$.} For $j = 1$, $\mathcal{P}_5(G(m,5), b_1) = \bigsqcup_{t=1}^{4} \mathcal{B}_{1,t}$ where
	\begin{align*}
		\mathcal{B}_{1,1} &= \bigl\{x_1x_2x_3a_i b_1 \bigm\vert 1 \le i \le m\bigr\}, & |\mathcal{B}_{1,1}| &= m; \\[1mm]
		\mathcal{B}_{1,2} &= \mathcal{X}_{2,2} \sqcup \mathcal{X}_{3,2} \sqcup \mathcal{X}_{3,3}, & |\mathcal{B}_{1,2}| &= 3m - 1; \\[1mm]
		\mathcal{B}_{1,3} &= \bigl\{b_k a_i b_1 y_2 b_m \bigm\vert 2 \le k \le i \le m-1\bigr\}, & |\mathcal{B}_{1,3}| &= \frac{1}{2}(m-1)(m-2); \\[1mm]
		\mathcal{B}_{1,4} &= \bigl\{b_k a_i b_1 y_2 y_1 \bigm\vert 2 \le k \le i \le m, \, k \neq m \bigr\}, & |\mathcal{B}_{1,4}| &= \frac{1}{2}m(m-1)-1.	
	\end{align*}
	This yields $B_1 = \sum_{t=1}^{4} |\mathcal{B}_{1,t}| = m^2 + 2m - 1$.
	\medskip
	
	\noindent Now let $2 \le j \le m-1$. We have $\mathcal{P}_5(G(m,5), b_j) = \bigsqcup_{t=1}^{3} \mathcal{B}_{j,t}$ where
	\begin{align*}
		\mathcal{B}_{j,1} &= \bigl\{x_1x_2x_3a_ib_j, \, b_ja_ib_1y_2y_1 \bigm\vert j \le i \le m\bigr\}, & |\mathcal{B}_{j,1}| &= 2m - 2j + 2; \\[1mm]
		\mathcal{B}_{j,2} &= \bigl\{b_ja_ib_1y_2b_m \bigm\vert j \le i \le m-1\bigr\}, & |\mathcal{B}_{j,2}| &= m - j; \\[1mm]
		\mathcal{B}_{j,3} &= \{b_ja_mb_my_2y_1\}, & |\mathcal{B}_{j,3}| &= 1,
	\end{align*}
	which evaluates to $B_j = \sum_{t=1}^{3} |\mathcal{B}_{j,t}| = 3m - 3j + 3$.
	\medskip
	
	\noindent For $j = m$, $\mathcal{P}_5(G(m,5), b_m) = \{x_1x_2x_3a_mb_m, \, x_2x_3a_mb_my_2 \} \sqcup \mathcal{X}_{3,2} \sqcup \mathcal{A}_{m,3} \sqcup \mathcal{B}_{1,3}$. Hence,   
	\[
	B_m = 2 + (m - 1) + (2m - 2) + \frac{1}{2}(m-1)(m-2) = \frac{1}{2}m(m+1) + m.
	\]
	
	\noindent \textit{Computation of $Y_1$ and $Y_2$.} For $y_1$, $\mathcal{P}_5(G(m,5), y_1) = \mathcal{X}_{3,3} \sqcup \mathcal{B}_{1,4} \sqcup \mathcal{A}_{m,3}$ which provides
	\[
	Y_1 = m + \frac{1}{2}m(m-1)-1 + (2m - 2) = \frac{1}{2}m(m+1) + 2m - 3.
	\]
	Finally, for $y_2$, $\mathcal{P}_5(G(m,5), y_2) = \mathcal{P}_5(G(m,5), y_1) \sqcup \mathcal{X}_{2,2} \sqcup \mathcal{X}_{2,3} \sqcup \mathcal{X}_{3,2} \sqcup \mathcal{B}_{1,3}$. This establishes
	\[
	Y_2 = Y_1 + m + 1 + (m - 1) + \frac{1}{2}(m-1)(m-2) = m^2 + 3m - 2. \qedhere
	\]
\end{proof}

\begin{theorem}\label{t5}
	For every integer $m \ge 7$, the graph $G(m,5)$ is $[P_5]$-irregular.
\end{theorem}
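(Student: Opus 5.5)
The plan is to derive $[P_5]$-irregularity directly from the explicit formulae of Lemma~\ref{l3}, as in the proof of Theorem~\ref{t4}. The $2m+5$ degrees split into two blocks: the ``small'' linear sequences $\{A_i\}_{i=1}^{m-1}$ and $\{B_j\}_{j=2}^{m-1}$, and the seven remaining ``large'' values $A_m, X_1, X_2, X_3, B_1, B_m, Y_1, Y_2$. I would separate the two small sequences by a congruence argument, and order the large values by a single chain of strict inequalities lying above all small values.

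\textbf{Step 1 (small values).} By Lemma~\ref{l3}, $A_i = 3i+2 \equiv 2 \pmod 3$ for $1 \le i \le m-1$, and this sequence is strictly increasing with maximum $A_{m-1} = 3m-1$. Also $B_j = 3(m-j+1) \equiv 0 \pmod 3$ for $2 \le j \le m-1$, and this sequence is strictly decreasing with maximum $B_2 = 3m-3$. Hence all these terms are pairwise distinct, and every one of them is at most $3m-1$.

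\textbf{Step 2 (large values).} Writing $T = \frac12 m(m+1)$, I would establish the chain
\[
3m-1 < A_m < X_1 < B_m < X_2 < Y_1 < X_3 < B_1 < Y_2 .
\]
Each link can be checked as follows.
\begin{itemize}
\item $A_m = 4m-1 > 3m-1$ is immediate.
\item $A_m < X_1 = T$ is equivalent to $m^2 - 7m + 2 > 0$, which holds for $m \ge 7$.
\item $X_1 = T < T + m = B_m < T + m + 1 = X_2$ is immediate.
\item $X_2 < Y_1 = T + 2m - 3$ needs $m > 4$.
\item $Y_1 < X_3 = T + 3m + 1$ is clear.
\item $X_3 < B_1 = m^2 + 2m - 1$ is equivalent to $\frac12 m^2 - \frac32 m - 2 > 0$, which is true for $m \ge 7$.
\item $B_1 < Y_2 = m^2 + 3m - 2$ holds since $m > 1$.
\end{itemize}

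Combining Steps~1 and~2 shows that all vertex $[P_5]$-degrees are pairwise distinct. There is no real obstacle here, since Lemma~\ref{l3} has already done the counting. The only delicate points are the two quadratic comparisons, $A_m < X_1$ and $X_3 < B_1$. These are where the hypothesis $m \ge 7$ is used; the first is tight at $m = 7$, where $X_1 - A_m = 1$.
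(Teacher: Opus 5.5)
Your proposal is correct and is essentially the paper's proof: the same mod-$3$ separation of $\{A_i\}_{i=1}^{m-1}$ (residue $2$, maximum $3m-1$) and $\{B_j\}_{j=2}^{m-1}$ (residue $0$, maximum $3m-3$), combined with the same chain $A_m < X_1 < B_m < X_2 < Y_1 < X_3 < B_1 < Y_2$ lying above $A_{m-1} = 3m-1 < A_m = 4m-1$. The only slips are cosmetic: there are eight ``large'' values, not seven, and $X_3 < B_1$ reduces to $(m-4)(m+1) > 0$, so only $A_m < X_1$ actually needs $m \ge 7$.
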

\begin{proof}
	Fix an integer $m \ge 7$. Immediate inspection of Lemma~\ref{l3} yields the strict ordering
	\[
	A_m < X_1 < B_m < X_2 < Y_1 < X_3 < B_1 < Y_2.
	\]
	Next, we observe that the sequence $\bigl\{A_i\bigr\}_{i=1}^{m-1}$ is strictly increasing with a maximum of $A_{m-1} = 3m - 1$, and its terms leave a remainder of $2$ upon division by $3$. In turn, the sequence $\bigl\{B_j\bigr\}_{j=2}^{m-1}$ is strictly decreasing with a maximum of $B_2 = 3m - 3$, and its terms leave a remainder of $0$ upon division by $3$. Thus, the entries of these two sequences are pairwise distinct. Combined with the inequality $B_2 < A_{m-1} < A_m = 4m - 1$, this implies that all the vertex $[P_5]$-degrees in $G(m,5)$ are pairwise distinct, confirming that  $G(m,5)$ is \mbox{$[P_5]$-irregular}.
\end{proof}

Analogously to the proof of Corollary~\ref{c2}, we obtain the following result. 

\begin{corollary}\label{c3}
	For every integer $k \ge 19$, there exists a \mbox{$[P_5]$-irregular} graph of order $k$.
\end{corollary}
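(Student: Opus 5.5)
The statement is Corollary~\ref{c3}, and I will prove it exactly as Corollary~\ref{c2} was proved: treat odd and even orders separately, using Theorem~\ref{t5} for the odd case.

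\emph{Odd orders.} Let $k \ge 19$ be odd and set $m = \frac{1}{2}(k-5)$. Then $m$ is an integer with $m \ge 7$. The graph $G(m,5)$ has order $2m+5 = k$, and Theorem~\ref{t5} says it is $[P_5]$-irregular. This settles every odd $k \ge 19$.

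\emph{Even orders.} Let $k \ge 20$ be even and set $m = \frac{1}{2}(k-6)$, which again gives $m \ge 7$. Form $G'$ by adding one isolated vertex $z$ to $G(m,5)$, so $G'$ has order $2m+6 = k$. Since $z$ has no neighbours, it lies in no induced path of order $5$, so its $[P_5]$-degree is $0$. Every induced subgraph of $G'$ that contains a vertex of $G(m,5)$ and is isomorphic to $P_5$ is connected. Such a subgraph therefore avoids $z$, and so it is an induced $P_5$ of $G(m,5)$. Hence every old vertex keeps the $[P_5]$-degree it has in $G(m,5)$.

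By Theorem~\ref{t5}, these old degrees are pairwise distinct, so it remains only to check that none of them equals $0$. I will read this off Lemma~\ref{l3}.
\begin{itemize}
\item The smallest of the $A_i$ is $A_1 = 5$.
\item For $2 \le j \le m-1$, the smallest of the $B_j$ is $B_{m-1} = 6$.
\item All remaining values ($X_1, X_2, X_3, A_m, B_1, B_m, Y_1, Y_2$) are at least $\frac{1}{2}m(m+1) > 0$.
\end{itemize}
So all degrees in $G'$ are pairwise distinct, and $G'$ is $[P_5]$-irregular of order $k$.

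I see no genuine obstacle here. The only points needing care are the bookkeeping on the parity of $k$, which fixes the choice of $m$, and the positivity check above.
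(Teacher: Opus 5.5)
Your proof is correct and follows the paper's argument, which handles this corollary exactly as Corollary~\ref{c2}: odd $k$ via Theorem~\ref{t5} with $m=\frac{1}{2}(k-5)$, and even $k$ by adjoining an isolated vertex to $G\bigl(\frac{1}{2}(k-6),5\bigr)$, with positivity of all degrees read off Lemma~\ref{l3}. One harmless slip: $A_m=4m-1$ is not at least $\frac{1}{2}m(m+1)$ (Theorem~\ref{t5} gives $A_m<X_1$, e.g.\ $27<28$ for $m=7$), but $A_m$ is plainly positive, so your conclusion still holds.
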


\section{$[P_n]$-irregular graphs for $n \ge 6$}

To extend our construction to paths of arbitrary order $n \ge 6$, we introduce a parameterized family of graphs $\{G(m,n)\}$ and establish sufficient conditions on the parameter~$m$ to ensure that every such graph has pairwise distinct vertex $[P_n]$-degrees.

\begin{definition}
	Let $n \ge 6$ and $m \ge 2n$ be integers. The graph $G(m,n)$ (see Figure~\ref{fig6}) is defined as follows
	\begin{align*}
		V\bigl(G(m,n)\bigr) &= \{a_1, \dots, a_m\} \cup \{b_1, \dots, b_m\} \cup \{x_1, \dots, x_{n-2}\} 
		 \cup \{y_1, \dots, y_{n-3}\} \cup \{l\}, \\[1mm]
		E\bigl(G(m,n)\bigr) &= \bigl\{a_ia_j \bigm\vert 1 \le i < j \le m\bigr\} \cup \bigl\{a_ib_j \bigm\vert 1 \le j \le i \le m\bigr\} \\[1mm]
		&\quad \cup \bigl\{x_ix_{i+1} \bigm\vert 1 \le i \le n-3\bigr\} \cup \bigl\{x_{n-2}a_i \bigm\vert 1 \le i \le m\bigr\} \\[1mm]
		&\quad \cup \bigl\{y_iy_{i+1} \bigm\vert 1 \le i \le n-4\bigr\}  \cup \{b_1y_{n-3}, \, b_my_{n-3}\} \cup \bigl\{b_i l \bigm\vert 1 \le i \le m\bigr\}.
	\end{align*}
\end{definition}

\begin{figure}[ht]
	\centering
	\includegraphics[width=9.5cm]{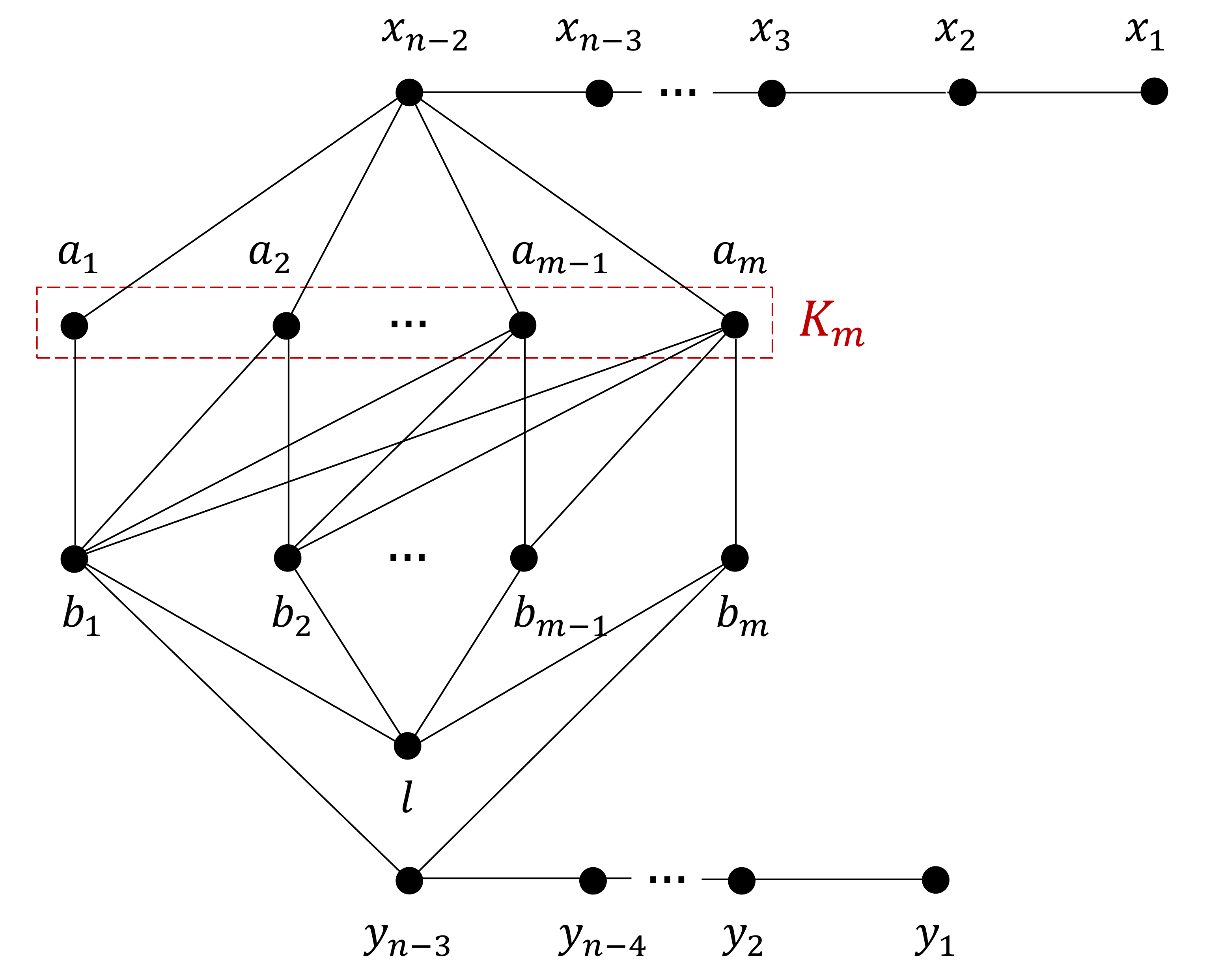}
	\caption{\small Graph $G(m,n)$.}
	\label{fig6}
\end{figure}

For the graph $G(m,n)$, let $A_i$, $B_j$, $X_k$, $Y_s$, and $L$ denote the $[P_n]$-degrees of the vertices $a_i$, $b_j$, $x_k$, $y_s$, and $l$,  respectively, where $1 \le i, j \le m$, $1 \le k \le n-2$, and $1 \le s \le n-3$.

\begin{lemma}\label{l4}
	The $[P_n]$-degrees of the vertices $x_k$ in $G(m,n)$ satisfy
	\begin{align*}
		X_1 &= \frac{1}{2}m(m+1), \qquad X_2 = (m+1)^2, \\[3pt]
		X_k &= \frac{1}{6}m^3 + \frac{1}{2}(k-1)m^2 + \frac{1}{6}(32-3k)m + 2k-5 \quad \text{for } 3 \le k \le n-2.
	\end{align*}
\end{lemma}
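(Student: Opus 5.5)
The plan is to reduce every induced $P_n$ through $x_k$ to a ``tail'' beyond the pendant path $x_1\dots x_{n-2}$, and then count tails by their length. Since $x_1,\dots,x_{n-3}$ have degree at most $2$ and the path $x_1\dots x_{n-2}$ has only $n-2<n$ vertices, an induced $P_n$ containing $x_k$ must consist of a segment $x_t x_{t+1}\dots x_{n-2}$ with $1\le t\le k$, followed by an induced path $T$ on $t+1$ vertices starting at some $a_i$. The internal vertices of $T$ must avoid $N(x_{n-2})=\{a_1,\dots,a_m\}$. Writing $N(r)$ for the number of such tails on $r$ vertices, we get
\[
X_k=\sum_{t=1}^{k}N(t+1).
\]
So the whole task becomes computing $N(r)$ for $2\le r\le n-1$.

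\textbf{Structure of tails.} Because the $a$'s form a clique joined to $x_{n-2}$, a tail uses exactly one $a_i$, followed by some $b_j$ with $j\le i$. From $b_j$ it can only continue to $l$, or to $y_{n-3}$ when $j\in\{1,m\}$. After $l$ it must go to some $b_{j'}$ with $j'>i$, since otherwise there is a chord to $a_i$. Such a $b_{j'}$ can only continue to $y_{n-3}$, and only if $j'=m$ and $j\ne 1$, because $b_1y_{n-3}$ would be a chord. After $y_{n-3}$ the tail either runs down the $y$-path or, coming from $b_1$, goes to $b_m$, which requires $i<m$; there it must stop, since both $l$ and the $a$'s create chords. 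I will list these patterns explicitly and check inducedness of each.

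\textbf{Counting by length.}
\begin{itemize}
\item $N(2)=\frac12 m(m+1)$, from the pairs $a_ib_j$; this gives $X_1$.
\item $N(3)$ collects $a_ib_jl$ ($\frac12m(m+1)$ tails), $a_ib_1y_{n-3}$ ($m$ tails) and $a_mb_my_{n-3}$ ($1$ tail). Adding $X_1$ gives $X_2=(m+1)^2$.
\item $N(4)$ collects $a_ib_jlb_{j'}$ with $j\le i<j'$, giving $\sum_i i(m-i)=\frac16(m^3-m)$ tails; $a_ib_1y_{n-3}y_{n-4}$ ($m$); $a_mb_my_{n-3}y_{n-4}$ ($1$); and $a_ib_1y_{n-3}b_m$ with $i<m$ ($m-1$). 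This yields the stated $X_3$.
\item For $5\le r\le n-1$ the surviving tails are $a_ib_1y_{n-3}\dots$ ($m$), $a_mb_my_{n-3}\dots$ ($1$), and $a_ib_jlb_my_{n-3}\dots$ with $2\le j\le i\le m-1$, which gives $\frac12(m-1)(m-2)$. Hence $N(r)=\frac12m(m-1)+2$, independent of $r$.
\end{itemize}
Summing these contributions then gives the closed form for $3\le k\le n-2$ by a linear-in-$k$ addition to $X_3$.

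\textbf{Main obstacle.} The hard part is the exhaustiveness and inducedness check for each $r$: every tail must be listed exactly once, and every chord must be ruled out. The chords to watch are those through $l$ (adjacent to all $b$'s), the edges $b_1y_{n-3}$ and $b_my_{n-3}$, and the adjacency $a_ib_{j'}$ for $j'\le i$. One also has to confirm that the $y$-path is long enough: the longest tail uses $r-4\le n-5\le n-3$ vertices $y$, so no count is truncated.
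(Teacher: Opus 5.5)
Your proposal is correct and is essentially the paper's proof: your $N(t+1)$ is exactly the paper's $|\mathcal{X}_t'|$, the number of induced $P_n$'s with end-vertex $x_t$, and your sum $X_k=\sum_{t\le k}N(t+1)$ is the paper's recurrence $X_k=X_{k-1}+|\mathcal{X}_k'|$. Your tail families for lengths $2$, $3$, $4$ and $\ge 5$ coincide with the paper's subfamilies $\mathcal{X}_1'$, $\mathcal{X}_{2,t}'$, $\mathcal{X}_{3,t}'$ and $\mathcal{X}_{h,t}'$. One small correction to your closing length check: the tails through $b_1y_{n-3}$ or $a_mb_my_{n-3}$ use $r-2$ vertices $y$, not $r-4$, so the relevant bound is $r-2\le n-3$; it still holds because $r\le n-1$, with equality exactly when $k=n-2$.
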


\begin{proof}
For each $1 \le k \le n-2$, let $\mathcal{X}_k'$ denote the family of induced paths of order $n$ in $G(m,n)$ with the end-vertex $x_k$. The structure of $G(m,n)$ yields the recurrence
\[
X_1 = |\mathcal{X}_1'|, \quad X_k = X_{k-1} + |\mathcal{X}_k'| \quad \text{for } 2 \le k \le n-2.
\]
	
\noindent\textit{Computation of $X_1$.} Since $\mathcal{X}_1'= \bigl\{ x_1 \dots x_{n-2} a_i b_j \bigm| 1 \le j \le i \le m \bigr\}$, it follows that
\[
X_1 = |\mathcal{X}_1'| = \frac{1}{2}m(m+1).
\]
	
\noindent\textit{Computation of $X_2$.} We decompose the family $\mathcal{X}_2' = \coprod_{t=1}^{3} \mathcal{X}_{2,t}'$, where
\begin{align*}
	\mathcal{X}_{2,1}' &= \bigl\{ x_2 \dots x_{n-2} a_i b_j l \bigm| 1 \le j \le i \le m \bigr\}, & |\mathcal{X}_{2,1}'| &= \frac{1}{2}m(m+1); \\
	\mathcal{X}_{2,2}' &= \bigl\{ x_2 \dots x_{n-2} a_i b_1 y_{n-3} \bigm| 1 \le i \le m \bigr\}, & |\mathcal{X}_{2,2}'| &= m; \\[1mm]
	\mathcal{X}_{2,3}' &= \bigl\{ x_2 \dots x_{n-2} a_m b_m y_{n-3} \bigr\}, & |\mathcal{X}_{2,3}'| &= 1.
\end{align*}
This yields $|\mathcal{X}_2'| = \sum_{t=1}^{3} |\mathcal{X}_{2,t}'| = \displaystyle\frac{1}{2}(m+1)(m+2)$, establishing 
\[
X_2 = X_1 + |\mathcal{X}_2'| = \frac{1}{2}m(m+1) + \frac{1}{2}(m+1)(m+2) = (m+1)^2.
\]
	
\noindent\textit{Computation of $X_3$.} The family $\mathcal{X}_3'$ is represented as $\coprod_{t=1}^{3} \mathcal{X}_{3,t}'$, where
\begin{align*}
	\mathcal{X}_{3,1}' &= \bigl\{ x_3 \dots x_{n-2} a_i b_j l b_q \bigm| 1 \le j \le i \le m-1, \, i+1 \le q \le m \bigr\}, \\[1mm]
	\mathcal{X}_{3,2}' &= \bigl\{ x_3 \dots x_{n-2} a_i b_1 y_{n-3} v \bigm| 1 \le i \le m-1, \, v \in \{b_m, y_{n-4}\} \bigr\}, \\[1mm]
	\mathcal{X}_{3,3}' &= \bigl\{ x_3 \dots x_{n-2} a_m b_1 y_{n-3} y_{n-4}, \, x_3 \dots x_{n-2} a_m b_m y_{n-3} y_{n-4} \bigr\}.
\end{align*}
Clearly, we have $|\mathcal{X}_{3,2}'| = 2m-2$ and $|\mathcal{X}_{3,3}'| = 2$, while  $|\mathcal{X}_{3,1}'|$ is calculated as follows:
\begin{align*}			
	|\mathcal{X}_{3,1}'| = \sum_{i=1}^{m-1} \sum_{j=1}^{i} (m-i) &= \sum_{i=1}^{m-1} i(m-i) = m \sum_{i=1}^{m-1} i - \sum_{i=1}^{m-1} i^2 \\
	&= \frac{1}{2}m^2(m-1) - \frac{1}{6}m(m-1)(2m-1) = \frac{1}{6}m(m^2-1).
\end{align*}
Thus, $|\mathcal{X}_3'| = \sum_{t=1}^{3} |\mathcal{X}_{3,t}'| = \displaystyle\frac{1}{6}m^3 + \frac{11}{6}m$, and we confirm the lemma for $k = 3$:
\begin{align*}
	X_3 = X_2 + |\mathcal{X}_3'| &= (m+1)^2 + \frac{1}{6}m^3 + \frac{11}{6}m \\[1.5mm]
	&= \frac{1}{6}m^3 + m^2 + \frac{23}{6}m + 1.
\end{align*}	
	
\noindent\textit{Computation of $X_k$ for $4 \le k \le n-2$.} 
\smallskip

\noindent For an arbitrary index $4 \le h \le k$, the family $\mathcal{X}_h'$ is partitioned as $\coprod_{t=1}^{3} \mathcal{X}_{h,t}'$, where
\begin{align*}
	\mathcal{X}_{h,1}' &= \bigl\{ x_h \dots x_{n-2} a_i b_1 y_{n-3} \dots y_{n-h-1} \bigm| 1 \le i \le m \bigr\}, \\[1mm]
	\mathcal{X}_{h,2}' &= \bigl\{ x_h \dots x_{n-2} a_m b_m y_{n-3} \dots y_{n-h-1} \bigr\}, \\[1mm] 
	\mathcal{X}_{h,3}' &= \bigl\{ x_h \dots x_{n-2} a_i b_j l b_m y_{n-3} \dots y_{n-h+1} \bigm| 2 \le j \le i \le m-1 \bigr\}. 
\end{align*}
The cardinalities $|\mathcal{X}_{h,1}'| = m$, $|\mathcal{X}_{h,2}'| = 1$, and $|\mathcal{X}_{h,3}'| = \displaystyle\frac{1}{2}(m-1)(m-2)$ sum to
\[
|\mathcal{X}_h'| = \sum_{t=1}^{3} |\mathcal{X}_{h,t}'| = \frac{1}{2}(m^2-m+4).
\]
Using the relation $X_k = X_{k-1} + |\mathcal{X}_k'|$, we arrive at the final expression
\begin{align*}
	X_k = X_3 + \sum_{h=4}^k |\mathcal{X}_h'| &=\frac{1}{6}m^3 + m^2 + \frac{23}{6}m + 1 + \frac{1}{2}(k-3)(m^2-m+4) \\[1.5mm]
	&= \frac{1}{6}m^3 + \frac{1}{2} (k-1)m^2 + \frac{1}{6}(32-3k)m + 2k-5. \qedhere
\end{align*}
\end{proof}

\begin{lemma}\label{l5}
	The $[P_n]$-degrees of the vertices $y_s$ in $G(m,n)$ satisfy
	\begin{align*}
		Y_1 &= \displaystyle\frac{1}{2}m^2 + \displaystyle\frac{9}{2}m - 7, \qquad Y_2 = m^2 + 4m - 5, \\[1.5mm]
		Y_s &= \displaystyle\frac{1}{6}m^3 + \displaystyle\frac{1}{2}(s-1)m^2 + \displaystyle\frac{1}{6}(32-3s)m + 2s - 9 \quad \text{for } 3 \le s \le n-4 \text{ with } n \ge 7, \\[1.5mm]
		Y_{n-3} &= \displaystyle\frac{1}{6}m^3 + \displaystyle\frac{1}{2}(n-4)m^2 + \displaystyle\frac{1}{6}(47-3n)m + 2n - 16.
	\end{align*}
\end{lemma}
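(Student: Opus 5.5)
The plan follows the proof of Lemma~\ref{l4}, with the roles of the $x$-chain and the $y$-chain exchanged. For $1 \le s \le n-3$, let $\mathcal{Y}_s'$ denote the family of induced paths of order $n$ in $G(m,n)$ having $y_s$ as an end-vertex.

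\textbf{The recurrence.} Take $2 \le s \le n-4$ and let $Q$ be an induced $P_n$ containing $y_s$ but not ending at $y_s$. Then $Q$ contains both $y_{s-1}$ and $y_{s+1}$. Conversely, an induced $P_n$ containing $y_{s-1}$ must contain $y_s$, because $y_1 \dots y_{s-1}$ has fewer than $n$ vertices and is attached to the rest of the graph only through $y_s$. This gives
\[
Y_1 = |\mathcal{Y}_1'|, \qquad Y_s = Y_{s-1} + |\mathcal{Y}_s'| \quad \text{for } 2 \le s \le n-4.
\]
The vertex $y_{n-3}$ needs separate treatment, because it has the extra neighbours $b_1$ and $b_m$. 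A path through $y_{n-3}$ that does not end there either passes through $y_{n-4}$, or has the form $\dots b_1 y_{n-3} b_m \dots$. Hence
\[
Y_{n-3} = Y_{n-4} + |\mathcal{Y}_{n-3}'| + |\mathcal{Z}|,
\]
where $\mathcal{Z}$ is the family of induced $P_n$ containing the segment $b_1 y_{n-3} b_m$ and avoiding $y_{n-4}$.

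\textbf{Computing $Y_1$.} Every path in $\mathcal{Y}_1'$ starts with $y_1 \dots y_{n-3}$ and continues through $b_1$ or $b_m$ with exactly three more vertices. From $b_1$ the admissible tails are:
\begin{itemize}
\item $b_1 a_i x_{n-2}$, giving $m$ paths;
\item $b_1 a_i b_j$ with $2 \le j \le i$ and $j \ne m$, giving $\frac12 m(m-1)-1$ paths;
\item $b_1 l b_j$ with $2 \le j \le m-1$, giving $m-2$ paths.
\end{itemize}
From $b_m$ the admissible tails are:
\begin{itemize}
\item $b_m a_m v$ with $v \in \{x_{n-2}, a_1, \dots, a_{m-1}, b_2, \dots, b_{m-1}\}$, giving $2m-2$ paths;
\item $b_m l b_j$ with $2 \le j \le m-1$, giving $m-2$ paths.
\end{itemize}
These counts sum to $\frac12 m^2 + \frac92 m - 7$, matching the statement.

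\textbf{Computing $Y_s$ for $s \ge 2$.} The same case analysis applies, now with $n-s-2$ vertices left after $y_s \dots y_{n-3}$. The families to enumerate are tails of the following types:
\begin{itemize}
\item $b_1 a_i x_{n-2} \dots$ and $b_m a_m x_{n-2} \dots$ running up the $x$-chain;
\item $b_1 a_i b_j l b_q$ and $b_1 l b_j a_i b_q$, together with $b_m$-analogues;
\item tails that re-enter the $b$-layer through $l$.
\end{itemize}
For $s = 2$ only four extra vertices are available, and the count should be quadratic. For $s \ge 3$, the tails $b_1 a_i b_j l b_q$ with $j \le i < q$ contribute a double sum $\sum_i i(m-i) = \frac16 m(m^2-1)$, exactly as $|\mathcal{X}_{3,1}'|$ did. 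After that, each further step $s \to s+1$ should add a quadratic increment, in analogy with $|\mathcal{X}_h'| = \frac12(m^2-m+4)$. Telescoping these increments gives the stated closed form, whose coefficients differ from those of $X_k$ only in the constant term.

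\textbf{Main obstacle.} I expect the hard step to be the boundary vertex $y_{n-3}$, that is, the family $\mathcal{Z}$ together with $\mathcal{Y}_{n-3}'$. Here the path runs through $b_1 y_{n-3} b_m$, both ends extend into the dense $a$-clique and through $l$, and many configurations fail to be induced: $l$ is adjacent to all $b_j$, and every $a_i$ is adjacent to $b_1$. I would organise this count by the pair of branches leaving $b_1$ and $b_m$ (via $a$, via $l$, or via the $x$-chain) and by how the remaining $n-3$ vertices are split between the two sides. For each case I would check inducedness against the edges $a_i b_1$, $a_m b_m$ and $l b_j$, and then simplify to $\frac16 m^3 + \frac12(n-4)m^2 + \frac16(47-3n)m + 2n - 16$.
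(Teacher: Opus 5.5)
Your recurrence and your count of $Y_1$ are correct and agree with the paper. This includes the split $Y_{n-3}=Y_{n-4}+|\mathcal{Y}_{n-3}'|+|\mathcal{Z}|$; here $\mathcal{Y}_{n-3}'\sqcup\mathcal{Z}$ is exactly the paper's family $\mathcal{Y}_{n-3}^*$. The gap is everything after that. The values $Y_2$, $Y_s$ for $s\ge 3$, and $Y_{n-3}$ are asserted by analogy with Lemma~\ref{l4} rather than counted, and the one concrete claim you make is false. The tails $b_1a_ib_jlb_q$ and $b_1lb_ja_ib_q$, and their $b_m$-analogues, are never induced, for two reasons:
\begin{itemize}
\item $l$ is adjacent to every $b_t$, so an induced path through $l$ contains at most two $b$-vertices, both path-neighbours of $l$;
\item $b_1$ is adjacent to $l$ and to every $a_i$.
\end{itemize}
So the term $\frac16m(m^2-1)$ borrowed from $\mathcal{X}_{3,1}'$ does not occur here at all. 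Also, after $y_s\dots y_{n-3}$ there remain $s+2$ vertices, not $n-s-2$.

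What is missing is the actual structure of the tails leaving $y_{n-3}$.
\begin{itemize}
\item Through $b_1$: once a path enters via $b_1$ it can use neither $l$ nor a second $a$-vertex, and neither $b_1a_ib_j$ nor $b_1lb_j$ can be continued. So the only tails through $b_1$ with at least four vertices are $b_1a_ix_{n-2}x_{n-3}\dots$, and there are $m$ of them.
\item Through $b_m$: there is one tail $b_ma_mx_{n-2}\dots$, plus the tails $b_mlb_ja_i$ with $2\le j\le i\le m-1$. The latter continue along $x_{n-2}x_{n-3}\dots$, or, when exactly one more vertex is needed ($s=3$), also by $a_p$ with $p<j$.
\end{itemize}
That last option is the genuine source of the cubic term, $\sum_{j=2}^{m-1}j(m-j)=\frac16(m-1)(m-2)(m+3)$. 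It yields $|\mathcal{Y}_2'|=\frac12(m^2-m+4)$, $|\mathcal{Y}_3'|=\frac16m^3-\frac16m+2$, and $|\mathcal{Y}_r'|=\frac12(m^2-m+4)$ for $r\ge4$.

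Your ``main obstacle'' is also misjudged. In $\mathcal{Z}$ the vertex $b_m$ must be an end, since its only other neighbours $a_m$ and $l$ are both adjacent to $b_1$. Hence $\mathcal{Z}=\{b_my_{n-3}b_1a_ix_{n-2}\dots x_3 \mid 1\le i\le m-1\}$ and $|\mathcal{Z}|=m-1$. This gives $Y_{n-3}=Y_{n-4}+\frac12(m^2+m+2)$ for $n\ge7$, and $Y_3=Y_2+|\mathcal{Y}_3'|+m-1$ for $n=6$. Without these counts the closed forms in the statement remain unproved.
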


\begin{proof}
For each $1 \le s \le n-3$, let $\mathcal{Y}_s'$ be the family of induced paths of order $n$ in $G(m,n)$ with the end-vertex $y_s$. Due to the structural properties of $G(m,n)$, the following recurrence relation holds
\[
Y_1 = |\mathcal{Y}_1'|, \quad Y_s = Y_{s-1} + |\mathcal{Y}_s'| \quad \text{for } 2 \le s \le n-4.
\]

\noindent\textit{Computation of $Y_1$.} We decompose $\mathcal{Y}_1' = \coprod_{t=1}^{4} \mathcal{Y}_{1,t}'$, where
\begin{align*}
	\mathcal{Y}_{1,1}' &= \bigl\{ y_1 \dots y_{n - 3}b_ma_mv \bigm| v \in \{x_{n - 2}\} \cup \{a_1, \dots, a_{m - 1}\} \cup \{b_2, \dots, b_{m - 1}\} \bigr\}, \\[1mm]
	\mathcal{Y}_{1,2}' &= \bigl\{ y_1 \dots y_{n - 3}b_1a_i v \bigm| 1 \le i \le m - 1,\, v \in \{x_{n - 2}\} \cup \{b_2, \dots, b_{i}\} \bigr\}, \\[1mm]
	\mathcal{Y}_{1,3}' &= \bigl\{ y_1 \dots y_{n - 3}b_1a_mv \bigm| v \in \{x_{n - 2}\} \cup \{b_2, \dots, b_{m - 1}\} \bigr\}, \\[1mm]
	\mathcal{Y}_{1,4}' &= \bigl\{ y_1\dots y_{n - 3}vlb_j \bigm| 2 \le j \le m - 1,\, v \in \{b_1, b_m\} \bigr\}.
\end{align*}
The cardinalities of these subfamilies are given by $|\mathcal{Y}_{1,1}'| = 2m-2$, $|\mathcal{Y}_{1,2}'| = \displaystyle\frac{1}{2}m(m - 1)$, $|\mathcal{Y}_{1,3}'| = m-1$, and $|\mathcal{Y}_{1,4}'| = 2m - 4$, which results in
\[
Y_1 = |\mathcal{Y}_1'| = \sum_{t=1}^{4} |\mathcal{Y}_{1,t}'| = \frac{1}{2}m^2 + \frac{9}{2}m - 7.
\]

\noindent\textit{Computation of $Y_2$.} We partition $\mathcal{Y}_2' = \bigsqcup_{t=1}^{3} \mathcal{Y}_{2,t}'$, where
\begin{align*}
	\mathcal{Y}_{2,1}' &= \bigl\{ y_2 \dots y_{n-3} b_1 a_i x_{n-2} x_{n-3} \bigm| 1 \le i \le m \bigr\}, && |\mathcal{Y}_{2,1}'| = m; \\[1mm]
	\mathcal{Y}_{2,2}' &= \bigl\{ y_2 \dots y_{n-3} b_m a_m x_{n-2} x_{n-3} \bigr\}, && |\mathcal{Y}_{2,2}'| = 1; \\
	\mathcal{Y}_{2,3}' &= \bigl\{ y_2 \dots y_{n-3} b_m l b_j a_i \bigm| 2 \le j \le i \le m-1 \bigr\}, && |\mathcal{Y}_{2,3}'|= \frac{1}{2}(m-1)(m-2).
\end{align*}
Consequently, $|\mathcal{Y}_2'| = \sum_{t=1}^{3} |\mathcal{Y}_{2,t}'| =\displaystyle\frac{1}{2}m^2 - \frac{1}{2}m + 2$, so that
\[
Y_2 = Y_1 + |\mathcal{Y}_2'| = \frac{1}{2}m^2 + \frac{9}{2}m - 7 + \frac{1}{2}m^2 - \frac{1}{2}m + 2 = m^2 + 4m - 5.
\]

\noindent\textit{Computation of $|\mathcal{Y}_3'|$.}
The family $\mathcal{Y}_3'$ is split into $\coprod_{t=1}^{3} \mathcal{Y}_{3,t}'$, where
\begin{align*}
	\mathcal{Y}_{3,1}' &= \bigl\{ y_3 \dots y_{n-3} b_1 a_i x_{n-2} x_{n-3} x_{n-4} \bigm| 1 \le i \le m \bigr\}, \\[1mm]
	\mathcal{Y}_{3,2}' &= \bigl\{ y_3 \dots y_{n-3} b_m l b_j a_i v \bigm| 2 \le j \le i \le m-1, \, v \in \{x_{n-2}\} \cup \{a_1, \dots, a_{j-1}\} \bigr\}, \\[1mm]
	\mathcal{Y}_{3,3}' &= \bigl\{ y_3 \dots y_{n-3} b_m a_m x_{n-2} x_{n-3} x_{n-4} \bigr\}.
\end{align*}
For the subfamilies $\mathcal{Y}_{3,1}'$ and $\mathcal{Y}_{3,3}'$, we have $|\mathcal{Y}_{3,1}'| = m$ and $|\mathcal{Y}_{3,3}'| = 1$, while
\begin{align*}	
	|\mathcal{Y}_{3,2}'| = \sum_{j=2}^{m-1} \sum_{i=j}^{m-1} j 
	&= \sum_{j=2}^{m-1} j(m-j) = m \sum_{j=1}^{m-1} j - \sum_{j=1}^{m-1} j^2 - (m-1) \\[1mm]
	&= \frac{1}{2} m^2(m-1) - \frac{1}{6} m(m-1)(2m-1) - (m-1) \\[1.5mm]
	&= \frac{1}{6}(m-1)(m-2)(m+3).
\end{align*}
Summing these cardinalities, we find
\[
|\mathcal{Y}_3'| = \sum_{t=1}^{3} |\mathcal{Y}_{3,t}'| = \frac{1}{6}m^3 - \frac{1}{6}m + 2.
\]

\noindent\textit{Computation of $Y_3$ for $n \ge 7$.} Direct substitution into the recurrence relation yields
\[
Y_3 = Y_2 + |\mathcal{Y}_3'| = m^2 + 4m - 5 + \frac{1}{6}m^3 - \frac{1}{6}m + 2 = \frac{1}{6}m^3 + m^2 + \frac{23}{6}m - 3.
\]
 
\noindent\textit{Computation of $Y_s$ for $4 \le s \le n - 4$, $n \ge 8$.} 
\smallskip

\noindent For an arbitrary index $4 \le r \le s$, the family $\mathcal{Y}_r'$ is represented as $\coprod_{t=1}^{3} \mathcal{Y}_{r,t}'$, where
\begin{align*}
	\mathcal{Y}_{r,1}' &= \bigl\{ y_r \dots y_{n-3} b_1 a_i x_{n-2} \dots x_{n-r-1} \bigm| 1 \le i \le m \bigr\}, \\[1mm]
	\mathcal{Y}_{r,2}' &= \bigl\{ y_r \dots y_{n-3} b_m l b_j a_i x_{n-2} \dots x_{n-r+1} \bigm| 2 \le j \le i \le m-1 \bigr\}, \\[1mm]
	\mathcal{Y}_{r,3}' &= \bigl\{ y_r \dots y_{n-3} b_m a_m x_{n-2} \dots x_{n-r-1} \bigr\}.
\end{align*}
Since $|\mathcal{Y}_{r,1}'| = m$, $|\mathcal{Y}_{r,3}'| = 1$, and $|\mathcal{Y}_{r,2}'| = \displaystyle\frac{1}{2}(m-1)(m-2)$, it follows that
\[
|\mathcal{Y}_r'| = \sum_{t=1}^{3} |\mathcal{Y}_{r,t}'| = \frac{1}{2}(m^2 - m + 4).
\]
By employing the recurrence relation $Y_s = Y_{s-1} + |\mathcal{Y}_s'|$, we obtain
\begin{align*}
	Y_s = Y_3 + \sum_{r=4}^s |\mathcal{Y}_r'| 
	&= \frac{1}{6}m^3 + m^2 + \frac{23}{6}m - 3 + \frac{1}{2}(s-3)(m^2 - m + 4) \\
	&= \frac{1}{6}m^3 + \frac{1}{2}(s-1)m^2 + \frac{1}{6}(32-3s)m + 2s - 9.
\end{align*}

\noindent\textit{Computation of $Y_{n-3}$.}
Let $\mathcal{Y}_{n-3}^*$ be the family of induced paths of order $n$ in $G(m,n)$ containing $y_{n-3}$ but not $y_{n-4}$. We consider two cases depending on the parameter $n$.

\medskip
\noindent\textbf{Case 1: $n \ge 7$.}
Here, $\mathcal{Y}_{n-3}^*$ consists of four disjoint subfamilies
\begin{align*}
	\mathcal{Y}_{n-3,1}^* &= \bigl\{ y_{n-3} b_1 a_i x_{n-2} \dots x_2 \bigm| 1 \le i \le m \bigr\}, \\[1mm]
	\mathcal{Y}_{n-3,2}^* &= \bigl\{ y_{n-3} b_m a_m x_{n-2} \dots x_2 \bigr\}, \\[1mm]
	\mathcal{Y}_{n-3,3}^* &= \bigl\{ y_{n-3} b_m l b_j a_i x_{n-2} \dots x_4 \bigm| 2 \le j \le i \le m-1 \bigr\}, \\[1mm]
	\mathcal{Y}_{n-3,4}^* &= \bigl\{ b_m y_{n-3} b_1 a_i x_{n-2} \dots x_3 \bigm| 1 \le i \le m-1 \bigr\}.
\end{align*}
Since $|\mathcal{Y}_{n-3,1}^*| = m$, $|\mathcal{Y}_{n-3,2}^*| = 1$, $|\mathcal{Y}_{n-3,4}^*| = m-1$, and $|\mathcal{Y}_{n-3,3}^*| = \displaystyle\frac{1}{2}(m-1)(m-2)$, the summation yields
\[
|\mathcal{Y}_{n-3}^*| = \sum_{t=1}^{4} |\mathcal{Y}_{n-3,t}^*| = \frac{1}{2}(m^2 + m + 2).
\]
Due to the structural properties of $G(m,n)$, we have $Y_{n-3} = Y_{n-4} + |\mathcal{Y}_{n-3}^*|$. Therefore,
\begin{align*}
	Y_{n-3} &= \frac{1}{6}m^3 + \frac{1}{2}(n-5)m^2 + \frac{1}{6}(44-3n)m + 2n - 17 + \frac{1}{2}(m^2 + m + 2) \\[1.5mm]
	&= \frac{1}{6}m^3 + \frac{1}{2}(n-4)m^2 + \frac{1}{6}(47-3n)m + 2n - 16.
\end{align*}

\medskip
\noindent\textbf{Case 2: $n = 6$.}
Here, $\mathcal{Y}_3^* = \mathcal{Y}_3' \bigsqcup \bigl\{ b_m y_3 b_1 a_i x_4 x_3 \bigm| 1 \le i \le m-1 \bigr\}$, implying
\[
|\mathcal{Y}_3^*| = |\mathcal{Y}_3'| + m - 1 = \frac{1}{6}m^3 + \frac{5}{6}m + 1.
\]
Since $Y_3 = Y_2 + |\mathcal{Y}_3^*|$, we obtain the final formula for $Y_{n-3}$ in the case $n = 6$:
\[
Y_3 = m^2 + 4m - 5 + \frac{1}{6}m^3 + \frac{5}{6}m + 1 = \frac{1}{6}m^3 + m^2 + \frac{29}{6}m - 4. 
\]
To conclude the proof, we note that the boundary results for $Y_s$ when $s=3$, $n \ge 7$ and $Y_{n-3}$ when $n = 6$ are fully consistent with the general formulas stated in the lemma.
\end{proof}

\begin{lemma} \label{l6}
	The $[P_n]$-degrees of the vertices $a_i$ in $G(m,n)$ are exactly
	\begin{align*}
		A_i &= \frac{1}{2}m^2 - \frac{1}{2}m + (n - 1)i + 2 \quad \text{for } 1 \le i \le m - 1, \\[1mm]
		A_m &= 5m + 2n - 11.
	\end{align*}
\end{lemma}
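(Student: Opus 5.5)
We would compute $A_i$ exactly as in Lemmas~\ref{l4} and~\ref{l5}: partition $\mathcal{P}_n(G(m,n),a_i)$ into disjoint subfamilies indexed by the local structure of the path around $a_i$, count each subfamily explicitly, and sum.

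\textbf{Structural constraints.} Three facts about $G(m,n)$ drive the classification.
\begin{enumerate}
\item Since $\{a_1,\dots,a_m\}$ is a clique, an induced path contains at most two $a$-vertices, and they are consecutive.
\item Every $a$-vertex is adjacent to $x_{n-2}$, so a path containing two $a$-vertices cannot contain $x_{n-2}$.
\item The remaining vertices are reached only through three gates: the $x$-chain through $x_{n-2}$, the vertex $l$ (adjacent to all $b_j$), and $y_{n-3}$ (adjacent only to $b_1$ and $b_m$). Moreover, $b_j$ is adjacent to exactly $a_j,\dots,a_m$.
\end{enumerate}
Since $m\ge 2n$ is large relative to $n$, every path of order $n$ must leave the bounded part of the graph through one of these gates in a controlled way. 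This makes the classification finite and uniform in $n$.

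\textbf{Case split.}
\begin{enumerate}
\item[(A)] Paths with $a_i$ as an end-vertex or interior vertex, together with $x_{n-2}$. The $x$-side then runs down the chain $x_{n-2}x_{n-3}\dots$, and the other side goes $a_i b_j$ with $j\le i$ and continues through $l$ or $y_{n-3}$. These mirror the families $\mathcal{X}'_{h,t}$ and $\mathcal{Y}'_{r,t}$ already listed in the proofs of Lemmas~\ref{l4} and~\ref{l5}. Restricting those families to paths containing $a_i$ gives each count directly: for each $h$ (respectively $r$) the contribution is $1$, or the number of $j\le i$, or the number of admissible $(j,q)$.
\item[(B)] Paths containing a second vertex $a_k$, $k\ne i$, with the segment $\dots b\, a_i a_k\, b\dots$ or with $a_i$ at an end. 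Here non-adjacency forces the $b$-neighbour of $a_k$ to have index in $(i,k]$ when $k>i$, and forces $a_i$'s $b$-neighbour to be outside $N(a_k)$ when $k<i$. The tails then continue through $l$ or $y_{n-3}$.
\item[(C)] Paths with $a_i$ interior between two $b$'s, $b_j a_i b_q$ with $j,q\le i$. This forces the path to avoid $l$ adjacency, so one side must use $y_{n-3}$ via $b_1$ or $b_m$.
\end{enumerate}

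For each family we will write down the set explicitly, in the style of $\mathcal{A}_{i,t}$ in Lemmas~\ref{l2} and~\ref{l3}, and check inducedness. The checks are: no chord $a_k b_j$ with $j\le k$; no chord to $x_{n-2}$; and $l$ not adjacent to two non-consecutive $b$'s of the path. The vertex $a_m$ is treated separately because it is the only $a$ adjacent to $b_m$. This accounts for the different, linear formula $A_m=5m+2n-11$: $a_m$'s neighbourhood among the $b$'s is all of them, so any path containing $a_m$ and two $b$'s, or $a_m$ together with $l$-routes, is destroyed, and only a linear number of options survive.

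\textbf{Main obstacle.} The hardest step is the bookkeeping in (A)–(B). Individual families produce non-linear terms in $i$, such as $i(m-i)$ from routes $a_i b_j l b_q$ with $j\le i<q$, and $\tfrac12 i(i-1)$-type terms from pairs $a_k$ with $k<i$. These must combine into the claimed form $\tfrac12 m(m-1)+(n-1)i+2$, i.e.\ a constant quadratic part plus a part linear in $i$. The uniform length $n$ distributes the linear contribution $(n-1)i$ across roughly $n-1$ positions of the $x$- and $y$-chains, one family per position, each contributing $i$.

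To control this, we will first organise the count by the position $p\in\{1,\dots,n\}$ of $a_i$ along the path. For each $p$ the two arms have fixed lengths $p-1$ and $n-p$, and the available arm shapes are those already enumerated in Lemmas~\ref{l4}–\ref{l5}. We will then verify that the $p$-indexed contributions telescope to the stated total. Finally, we will sanity-check the result against a small case, e.g.\ $n=6$ and $m=12$, by direct enumeration.
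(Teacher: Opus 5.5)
Your strategy (partition $\mathcal{P}_n(G(m,n),a_i)$ into explicit disjoint families, count each, sum) is the paper's. But the proposal stops where the proof begins: no family is written down and no cardinality is computed. The plan to organise by the position $p$ of $a_i$ and combine ``arm shapes'' from Lemmas~\ref{l4}--\ref{l5} is not usable as stated, because the two arms at $a_i$ constrain each other: an arm through $x_{n-2}$ excludes a second $a$-vertex, two arms starting with $b$'s exclude $l$, and an arm $b_j\,l\,b_q$ forces $q>i$.

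The content of the lemma is the list itself. For $1\le i\le m-1$ it consists of:
\begin{itemize}
\item $x_1\dots x_{n-2}a_ib_j$ and $x_2\dots x_{n-2}a_ib_jl$ with $j\le i$ ($2i$ paths);
\item $x_3\dots x_{n-2}a_ib_1y_{n-3}b_m$ and $a_ia_mb_my_{n-3}\dots y_1$ ($2$);
\item $x_3\dots x_{n-2}a_ib_jlb_q$ with $j\le i<q$ ($i(m-i)$);
\item $x_k\dots x_{n-2}a_ib_1y_{n-3}\dots y_{n-k-1}$ with $2\le k\le n-2$ ($n-3$);
\item $x_k\dots x_{n-2}a_ib_jlb_my_{n-3}\dots y_{n-k+1}$ with $4\le k\le n-2$, together with $a_ib_jlb_my_{n-3}\dots y_2$, where $2\le j\le i$ ($(n-4)(i-1)$);
\item $a_pa_ib_jlb_my_{n-3}\dots y_3$ with $p<j\le i$ ($\frac{1}{2}i(i-1)$);
\item $a_ia_pb_jlb_my_{n-3}\dots y_3$ with $i<j\le p\le m-1$ ($\frac{1}{2}(m-i)(m-i-1)$);
\item $b_ja_ib_1y_{n-3}\dots y_1$ with $2\le j\le i$ ($i-1$).
\end{itemize}
The terms quadratic in $i$ cancel via $i(m-i)+\frac{1}{2}i(i-1)+\frac{1}{2}(m-i)(m-i-1)=\frac{1}{2}m(m-1)$, and the rest sum to $(n-1)i+2$.

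Exhaustiveness rests on facts your case split partly gets wrong:
\begin{itemize}
\item When two $a$-vertices lie on the path, the one with smaller index is an end-vertex, since all its other neighbours are adjacent to the larger one. So no segment $\dots b\,a_ia_k\,b\dots$ occurs.
\item $a_i$ is never an end-vertex next to $x_{n-2}$, since $x_k\dots x_{n-2}a_i$ has at most $n-1$ vertices.
\item Every path is forced into the $x$- or $y$-chain because $\{a_1,\dots,a_m,b_1,\dots,b_m,l\}$ induces no path on more than five vertices, not because $m$ is large relative to $n$.
\end{itemize}

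Your explanation of $A_m$ is wrong, and used as a pruning rule it gives the wrong value. Paths through $a_m$ with two $b$'s do survive, namely $b_ja_mb_1y_{n-3}\dots y_1$ and $b_ja_mb_my_{n-3}\dots y_1$ for $2\le j\le m-1$. So do the $m$ paths $x_2\dots x_{n-2}a_mb_jl$. Read literally, your rule leaves $m+(2n-6)+(m-1)=2m+2n-7$ paths instead of $5m+2n-11$.

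The correct principle is that every $b_j$ is adjacent to $a_m$, so every $b$ on an induced path through $a_m$ must be a path-neighbour of $a_m$. Hence $l$ can occur only as the end of a segment $a_mb_jl$, and no route $b_j\,l\,b_q$ survives. This kills the analogues of the families of sizes $i(m-i)$, $\frac{1}{2}i(i-1)$, $\frac{1}{2}(m-i)(m-i-1)$ and $(n-4)(i-1)$, leaving four families of sizes $2m$, $2n-6$, $m-2$ and $2m-3$.

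Finally, reusing Lemmas~\ref{l4} and~\ref{l5} is a legitimate shortcut, since every induced $P_n$ through $a_i$ has an end-vertex in the $x$- or $y$-chain. However, each path must be charged to exactly one end, for example paths containing $x_{n-2}$ to their $x$-end and all others to their $y$-end. The paths $x_k\dots x_{n-2}a_ib_1y_{n-3}\dots y_{n-k-1}$ have ends of both kinds and would otherwise be counted twice.
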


\begin{proof}
For a fixed $1 \le i \le m-1$, the family $\mathcal{P}_n(G(m,n), a_i)=\coprod_{t=1}^{8} \mathcal{A}_{i,t}$, where 
\begin{align*}
	\mathcal{A}_{i,1} &= \big\{x_1\dots x_{n - 2}a_ib_j, \, x_2\dots x_{n - 2}a_ib_jl \bigm| 1 \le j \le i \big\}, \\[1mm]
	\mathcal{A}_{i,2} &= \big\{x_3\dots x_{n - 2}a_ib_1y_{n - 3}b_m, \ a_ia_mb_my_{n - 3}\dots y_1\big\}, \\[1mm]
	\mathcal{A}_{i,3} &= \big\{x_3\dots x_{n - 2}a_ib_jlb_q \bigm| 1 \le j \le i < q \le m \big\}, \\[1mm]
	\mathcal{A}_{i,4} &= \big\{x_k \dots x_{n - 2}a_ib_1y_{n - 3}\dots y_{n - k - 1} \bigm| 2 \le k \le n - 2 \big\}, \\[1mm]
	\mathcal{A}_{i,5} &= \big\{x_k \dots x_{n - 2}a_ib_jlb_my_{n - 3}\dots y_{n - k + 1} \bigm| 2 \le j \le i, \ 2 \le k \le n - 3 \big\}, \\[1mm]
	\mathcal{A}_{i,6} &= \big\{a_pa_ib_jlb_my_{n - 3}\dots y_3 \bigm| 1 \le p < j \le i \big\}, \\[1mm]
	\mathcal{A}_{i,7} &= \big\{a_ia_pb_jlb_my_{n - 3}\dots y_3 \bigm| i + 1 \le j \le p \le m - 1 \big\}, \\[1mm]
	\mathcal{A}_{i,8} &= \big\{b_ja_ib_1y_{n - 3}\dots y_1 \bigm| 2 \le j \le i \big\}.
\end{align*}
The cardinalities of these subfamilies are given by
\begin{alignat*}{3}	
	|\mathcal{A}_{i,1}| &= 2i,  & |\mathcal{A}_{i,2}| &= 2,  & |\mathcal{A}_{i,3}| &= i(m - i), \\[1mm]
	|\mathcal{A}_{i,4}| &= n - 3, & |\mathcal{A}_{i,5}| &= (n - 4)(i - 1), & |\mathcal{A}_{i,8}| &= i - 1, \\[1mm]
	|\mathcal{A}_{i,6}| &= \frac{1}{2}i(i - 1), \qquad & |\mathcal{A}_{i,7}| &= \frac{1}{2}(m - i)(m - i - 1), &&
\end{alignat*}
which sums to
\[
A_i = \sum_{t=1}^{8} |\mathcal{A}_{i,t}| = \frac{1}{2}m^2 - \frac{1}{2}m + (n - 1)i + 2. 
\]
For the boundary case $i = m$, we have $\mathcal{P}_n(G(m,n), a_m) = \coprod_{t=1}^{4} \mathcal{A}_{m,t}$, where
\begin{align*}
	\mathcal{A}_{m,1} &= \big\{x_1\dots x_{n - 2} a_m b_j, \, x_2\dots x_{n - 2} a_m b_j l \bigm| 1 \le j \le m \big\}, \\[1mm]
	\mathcal{A}_{m,2} &= \big\{x_k\dots x_{n - 2} a_m b_j y_{n - 3}\dots y_{n-k-1} \bigm| 2 \le k \le n - 2, \; j \in \{1, m\}\big\}, \\[1mm]
	\mathcal{A}_{m,3} &= \big\{b_j a_m b_1 y_{n - 3}\dots y_1 \bigm| 2 \le j \le m - 1 \big\}, \\[1mm]
	\mathcal{A}_{m,4} &= \big\{v a_m b_m y_{n - 3}\dots y_1 \bigm| v \in \{a_1, \dots, a_{m - 1}\}\cup\{b_2, \dots, b_{m - 1}\}\big\}.
\end{align*}
Since $|\mathcal{A}_{m,1}| = 2m$, $|\mathcal{A}_{m,2}| = 2n - 6$, $|\mathcal{A}_{m,3}| = m - 2$, and $|\mathcal{A}_{m,4}| = 2m - 3$, we obtain
\[	
A_m = \sum _{t=1} ^4 |\mathcal{A}_{m,t}| = 5m + 2n - 11. \qedhere
\]
\end{proof}

\begin{lemma} \label{l7}
	The \mbox{$[P_n]$-degrees} of the vertices $b_j$ in $G(m,n)$ are given by
	\begin{align*}
		B_1 &= m^2 + mn - 4, \\[1mm]
		B_j &= \frac{1}{2}m^2 + \frac{1}{2}(2n-3)m + 6 - (n-1)j \quad \text{for } 2 \le j \le m-1, \\[1.5mm]
		B_m &= \frac{1}{6}m^3 + \frac{1}{2}(n - 4)m^2 + \frac{1}{6}(59 - 9n)m + 2n - 11.
	\end{align*}
\end{lemma}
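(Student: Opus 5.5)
We follow the scheme of Lemmas~\ref{l4}--\ref{l6}: for each $b_j$ we partition $\mathcal{P}_n(G(m,n),b_j)$ into disjoint subfamilies according to the position of $b_j$ in the path and the route the path takes through the gadgets. Structurally, $b_j$ is adjacent to $a_j,\dots,a_m$ and to $l$, and $b_1,b_m$ are additionally adjacent to $y_{n-3}$. Since the $a_i$ form a clique, an induced path contains at most two $a$'s, and these must be consecutive. The vertex $l$ is adjacent to all $b$'s, so an induced path contains at most two $b$'s through $l$, and then the $b$'s are not joined via a common $a$. The long arms $x_1\dots x_{n-2}$ and $y_1\dots y_{n-3}$ force every path of order $n$ to use a long stretch of one of them. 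Much of the work can be recycled from already computed families.

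\textbf{Case $2\le j\le m-1$.} Here $b_j$ has no $y$-neighbour. The paths containing $b_j$ are of the following types:
\begin{itemize}
\item $x_1\dots x_{n-2}a_ib_j$ and $x_2\dots x_{n-2}a_ib_jl$ with $j\le i\le m$, giving $2(m-j+1)$ paths;
\item $x_3\dots x_{n-2}a_ib_jlb_q$ and $x_3\dots x_{n-2}a_ib_qlb_j$, the analogues of $\mathcal{X}_{3,1}'$ and of $\mathcal{A}_{i,3}$ restricted to $b_j$;
\item the paths through $b_jlb_my_{n-3}\dots$ continued by $a_i x_{n-2}\dots$ or by $a_pa_i$, that is, the families $\mathcal{A}_{i,5},\mathcal{A}_{i,6},\mathcal{A}_{i,7}$ of Lemma~\ref{l6} counted now by $j$;
\item $b_ja_ib_1y_{n-3}\dots y_1$ for $j\le i\le m$ (from $\mathcal{A}_{i,8}$ and $\mathcal{A}_{m,3}$), and $b_ja_mb_my_{n-3}\dots y_1$ (from $\mathcal{A}_{m,4}$).
\end{itemize}
The cleanest bookkeeping is to count, for each $a_i$-family in Lemma~\ref{l6}, how many members contain $b_j$. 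This requires double-counting sums such as $\sum_{i\ge j}$ and $\sum_{p<j}$. The result must be linear in $j$ with slope $-(n-1)$, and this slope serves as a check. The slope arises from the $(n-4)$ choices of $k$ in $\mathcal{A}_{i,5}$, together with $2$ from $\mathcal{A}_{i,1}$ and $1$ from $\mathcal{A}_{i,8}$, each summed over $i\ge j$.

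\textbf{Case $j=1$.} The vertex $b_1$ is adjacent to every $a_i$, to $l$ and to $y_{n-3}$. The paths fall into three groups:
\begin{itemize}
\item paths ending in $x$-arm/$a$ prefixes: $x_1\dots a_ib_1$ and $x_2\dots a_ib_1l$;
\item paths through $y_{n-3}$: the families $\mathcal{X}_{k,1}'$ for $2\le k\le n-2$, $\mathcal{Y}_{1,2}'$ and $\mathcal{Y}_{1,3}'$, $\mathcal{Y}_{n-3,1}^*$ and $\mathcal{Y}_{n-3,4}^*$, $\mathcal{X}_{3,2}'$, and the $y_s$-end families passing $b_1$;
\item paths $a\dots b_1lb_q\dots$ (from $\mathcal{X}_{3,1}'$ with $j=1$), and paths $y\dots y_{n-3}b_1lb_q$ (from $\mathcal{Y}_{1,4}'$).
\end{itemize}
Summing these gives a quadratic in $m$, to be matched against $m^2+mn-4$.

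\textbf{Case $j=m$.} Here $b_m$ is adjacent only to $a_m$, $l$ and $y_{n-3}$. The long families $\mathcal{Y}_{r,2}'$ and $\mathcal{X}_{h,3}'$, together with the cubic families $\mathcal{X}_{3,1}'$ (with $q=m$) and $\mathcal{Y}_{3,2}'$, contain $b_m$. These produce the $\frac16 m^3$ term.

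We expect the main obstacle to be exhaustiveness and disjointness in the cases $j=1$ and $j=m$. A single $b$-vertex is shared by many families indexed by different end-vertices, so paths with both ends in the arms (e.g.\ $x_k\dots b_my_{n-3}\dots$) must be counted exactly once. To control this, we classify each path by its two end-vertices: $x$-arm, $y$-arm, $a$, $b$ or $l$. We verify the final formulas for small $n$ (say $n=6,7$) against the values at $m=2n$ as a sanity check.
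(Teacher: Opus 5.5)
Your proposal is a plan rather than a proof. The content of this lemma is a disjoint and exhaustive enumeration of $\mathcal{P}_n(G(m,n),b_j)$ with exact counts, and you explicitly defer exactly that step (``exhaustiveness and disjointness''). A numerical check at $m=2n$ for $n=6,7$ cannot establish the formulas for all $m,n$.

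Where you do enumerate, in the case $2\le j\le m-1$, the list is incomplete. It omits the two induced paths $b_jlb_1y_{n-3}\cdots y_1$ and $b_jlb_my_{n-3}\cdots y_1$. These contain no $a$-vertex, so the bookkeeping you recommend (reading off the $b_j$-paths from the $a_i$-families of Lemma~\ref{l6}) can never produce them. The same bookkeeping also meets every path $a_pa_ib_jlb_my_{n-3}\cdots y_3$ twice, once in $\mathcal{A}_{i,6}$ and once in $\mathcal{A}_{p,7}$. Even with those counted once, your list sums to the stated $B_j$ minus $2$. Your slope test cannot detect this, because the omitted contribution is constant in $j$. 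Apart from this omission, your families here coincide with the paper's $\mathcal{B}_{j,1},\dots,\mathcal{B}_{j,6}$ plus the path $b_ja_mb_my_{n-3}\cdots y_1$. The correct total is $3(m-j+1)+\frac{1}{2}(m-j)(m-j+1)+\frac{1}{2}j(j-1)+(n-4)(m-j)+(j-1)(m-j)+3$. In it, the three terms quadratic in $j$ combine to $\frac{1}{2}m^2-\frac{1}{2}m$.

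For $j=1$ and $j=m$ you give no count, and the families you cite overlap or are misidentified, so adding them up would not give $B_1$ or $B_m$.
\begin{itemize}
\item In Lemma~\ref{l4}, $\mathcal{X}_{2,1}'$ and $\mathcal{X}_{3,1}'$ are the $l$-families, not paths through $y_{n-3}$, so they re-count your first and third groups.
\item The paths $x_{n-2}a_ib_1y_{n-3}\cdots y_1$ lie both in $\mathcal{X}_{n-2,1}'$ and in $\mathcal{Y}_{1,2}'\cup\mathcal{Y}_{1,3}'$.
\item $\mathcal{Y}_{n-3,1}^*$ coincides with $\mathcal{X}_{2,2}'$, and $\mathcal{Y}_{n-3,4}^*$ with the $v=b_m$ half of $\mathcal{X}_{3,2}'$.
\item For $b_m$, every member of $\mathcal{Y}_{r,2}'$ already lies in $\mathcal{X}_{n-r+1,3}'$, and the $v=x_{n-2}$ part of $\mathcal{Y}_{3,2}'$ is exactly $\mathcal{X}_{n-2,3}'$.
\end{itemize}
You also misplace the cubic term. $\mathcal{X}_{3,1}'$ restricted to $q=m$ has only $\frac{1}{2}m(m-1)$ members. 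The $\frac{1}{6}m^3$ comes solely from the $v=a_p$ part of $\mathcal{Y}_{3,2}'$, i.e.\ the $\binom{m}{3}$ paths $a_pa_ib_jlb_my_{n-3}\cdots y_3$ with $p<j\le i\le m-1$. Most of the remaining $b_m$-families are not mentioned at all, e.g.\ $x_3\cdots x_{n-2}a_ib_1y_{n-3}b_m$, $a_ib_jlb_my_{n-3}\cdots y_2$, $va_mb_my_{n-3}\cdots y_1$ and $b_jlb_my_{n-3}\cdots y_1$.

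What is missing is a decomposition that is disjoint and complete by construction. For instance, split by whether $b_j$ is an end or an interior vertex of the path, and by which of its neighbours $a_i$, $l$, $y_{n-3}$ flank it. Each case then reduces to a single explicit family. These families are exactly the paper's lists $\mathcal{B}_{1,1},\dots,\mathcal{B}_{1,7}$, $\mathcal{B}_{j,1},\dots,\mathcal{B}_{j,7}$ and $\mathcal{B}_{m,1},\dots,\mathcal{B}_{m,9}$, whose sizes sum to the stated formulas.
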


\begin{proof}
\noindent\textit{Computation of $B_1$.} The family $\mathcal{P}_n(G(m,n), b_1) = \coprod_{t=1}^{7} \mathcal{B}_{1,t}$, where
\begin{align*}
	\mathcal{B}_{1,1} &= \bigl\{ x_1 \dots x_{n-2} a_i b_1, \, x_3 \dots x_{n-2} a_i b_1 y_{n-3} y_{n-4} \bigm| 1 \le i \le m \bigr\}, \\[1mm]
	\mathcal{B}_{1,2} &= \bigl\{ x_2 \dots x_{n-2} a_i b_1 v \bigm| 1 \le i \le m, \, v \in \{l, y_{n-3}\} \bigr\}, \\[1mm]
	\mathcal{B}_{1,3} &= \bigl\{ x_3 \dots x_{n-2} a_i b_1 l b_q \bigm| 1 \le i < q \le m \bigr\}, \\[1mm]
	\mathcal{B}_{1,4} &= \bigl\{ x_3 \dots x_{n-2} a_i b_1 y_{n-3} b_m \bigm| 1 \le i \le m-1 \bigr\}, \\[1mm]
	\mathcal{B}_{1,5} &= \bigl\{ x_k \dots x_{n-2} a_i b_1 y_{n-3} \dots y_{n-k-1} \bigm| 4 \le k \le n-2, \, 1 \le i \le m \bigr\}, \\[1mm]
	\mathcal{B}_{1,6} &= \bigl\{ b_q l b_1 y_{n-3} \dots y_1 \bigm| 2 \le q \le m-1 \bigr\}, \\[1mm]
	\mathcal{B}_{1,7} &= \bigl\{ b_q a_i b_1 y_{n-3} \dots y_1 \bigm| 2 \le q \le m-1, \, q \le i \le m \bigr\}.
\end{align*}
The cardinalities of these subfamilies are:
\begin{alignat*}{3}			
	|\mathcal{B}_{1,1}| &= |\mathcal{B}_{1,2}| = 2m, \quad & |\mathcal{B}_{1,3}| &= \frac{1}{2}m(m-1), \quad & |\mathcal{B}_{1,4}| &= m - 1, \\[1mm]
	|\mathcal{B}_{1,5}| &= (n-5)m, \quad \quad \, \, & |\mathcal{B}_{1,6}| &= m - 2, && \\[1mm]
	|\mathcal{B}_{1,7}| &= \rlap{$\displaystyle\sum_{q=2}^{m-1} (m-q+1) = \frac{1}{2}(m-2)(m+1).$} &&
\end{alignat*}
Consequently, the \mbox{$[P_n]$-degree} of the vertex $b_1$ in $G(m,n)$ is given by
\[
B_1 = \sum_{t=1}^{7} |\mathcal{B}_{1,t}| = m^2 + mn - 4.
\]

\noindent\textit{Computation of $B_j$ for $2 \le j \le m-1$.} 
\medskip
 
\noindent For a fixed $2 \le j \le m-1$, the family $\mathcal{P}_n(G(m,n), b_j) = \coprod_{t=1}^{7} \mathcal{B}_{j,t}$, where
\begin{align*}
	\mathcal{B}_{j,1} &= \bigl\{ x_1 \dots x_{n-2} a_i b_j, \, x_2 \dots x_{n-2} a_i b_j l, \, b_j a_i b_1 y_{n-3} \dots y_1 \bigm| j \le i \le m \bigr\}, \\[1mm]
	\mathcal{B}_{j,2} &= \bigl\{ x_3 \dots x_{n-2} a_i b_j l b_q \bigm| j \le i < q \le m \bigr\}, \\[1mm]
	\mathcal{B}_{j,3} &= \bigl\{ x_3 \dots x_{n-2} a_i b_q l b_j \bigm| 1 \le i \le j-1, \, 1 \le q \le i \bigr\}, \\[1mm]
	\mathcal{B}_{j,4} &= \bigl\{ x_k \dots x_{n-2} a_i b_j l b_m y_{n-3} \dots y_{n-k+1} \bigm| 4 \le k \le n-2, \, j \le i \le m-1 \bigr\}, \\[1mm]
	\mathcal{B}_{j,5} &= \bigl\{ a_i b_j l b_m y_{n-3} \dots y_2 \bigm| j \le i \le m-1 \bigr\}, \\[1mm]
	\mathcal{B}_{j,6} &= \bigl\{ a_i a_p b_j l b_m y_{n-3} \dots y_3 \bigm| 1 \le i \le j-1, \, j \le p \le m-1 \bigr\}, \\[1mm]
	\mathcal{B}_{j,7} &= \bigl\{ b_j l w y_{n-3} \dots y_1 \bigm| w \in \{b_1, b_m\} \bigr\} \cup \bigl\{ b_j a_m b_m y_{n-3} \dots y_1 \bigr\}.
\end{align*}
The cardinalities of these subfamilies are:
\begin{alignat*}{3}				
	|\mathcal{B}_{j,1}| &= 3(m - j + 1), \quad & |\mathcal{B}_{j,2}| &= \frac{1}{2}(m-j)(m-j+1), \quad & |\mathcal{B}_{j,3}| &= \frac{1}{2}j(j-1), \\[1mm]
	|\mathcal{B}_{j,4}| &= (n-5)(m-j), \quad & |\mathcal{B}_{j,5}| &= m - j, \qquad |\mathcal{B}_{j,7}| = 3, \quad & |\mathcal{B}_{j,6}| &= (j-1)(m-j).
\end{alignat*}
Consequently, the \mbox{$[P_n]$-degree} of the vertex $b_j$ in $G(m,n)$ is given by
\[
B_j = \sum_{t=1}^{7} |\mathcal{B}_{j,t}| = \frac{1}{2}m^2 + \frac{1}{2}(2n-3)m + 6 - (n-1)j.
\]
\noindent\textit{Computation of $B_m$.} The family $\mathcal{P}_n(G(m,n), b_m) = \coprod_{t=1}^{9} \mathcal{B}_{m,t}$, where
\begin{align*}
	\mathcal{B}_{m,1} &= \bigl\{ x_1 \dots x_{n-2} a_m b_m, \, x_2 \dots x_{n-2} a_m b_m l, \, x_2 \dots x_{n-2} a_m b_m y_{n-3}\bigr\}, \\[1mm]
	\mathcal{B}_{m,2} &= \bigl\{ x_3 \dots x_{n-2} a_i b_1 y_{n-3} b_m \bigm| 1 \le i \le m-1 \bigr\}, \\[1mm]
	\mathcal{B}_{m,3} &= \bigl\{ x_3 \dots x_{n-2} a_i b_j l b_m \bigm| 1 \le j \le i \le m-1 \bigr\}, \\[1mm]
	\mathcal{B}_{m,4} &= \bigl\{ x_k \dots x_{n-2} a_m b_m y_{n-3} \dots y_{n-k-1} \bigm| 3 \le k \le n-2 \bigr\}, \\[1mm]
\mathcal{B}_{m,5} &= \bigl\{ x_k \dots x_{n-2} a_i b_j l b_m y_{n-3}\dots y_{n-k+1} \bigm| 4 \le k \le n-2, \, 2 \le j \le i \le m-1 \bigr\}, \\[1mm]
\mathcal{B}_{m,6} &=\bigl \{ a_i a_p b_j l b_m y_{n-3}\dots y_{3} \bigm| 1 \le i < p \le m-1, \, i+1 \le j \le p \bigr\}, \\[1mm]
\mathcal{B}_{m,7} &= \bigl\{ a_i b_j l b_m y_{n-3}\dots y_{2} \bigm| 2 \le j \le i \le m-1 \bigr\}, \\[1mm]
\mathcal{B}_{m,8} &= \bigl\{ v a_m b_m y_{n-3}\dots y_{1} \bigm| v \in \{a_1, \dots, a_{m-1}\} \cup \{b_2, \dots, b_{m-1}\} \bigr\}, \\[1mm]
\mathcal{B}_{m,9} &= \bigl\{ b_j l b_m y_{n-3}\dots y_{1} \bigm| 2 \le j \le m-1 \bigr\}.
\end{align*}
The cardinalities of these subfamilies are:
\begin{alignat*}{3}			
	|\mathcal{B}_{m,1}| &= 3, \qquad & |\mathcal{B}_{m,2}| &= m-1, \qquad & |\mathcal{B}_{m,3}| &= \frac{1}{2}m(m-1), \\[1mm]
	|\mathcal{B}_{m,4}| &= n-4, \quad & |\mathcal{B}_{m,5}| &= \frac{1}{2}(n-5)(m-1)(m-2), \quad & |\mathcal{B}_{m,7}| &= \frac{1}{2}(m-1)(m-2), \\[1mm]
	|\mathcal{B}_{m,8}| &= 2m-3, \quad & |\mathcal{B}_{m,9}| &= m-2, &&
\end{alignat*}
while the remaining value $|\mathcal{B}_{m,6}|$ is calculated as
\begin{align*}		
	|\mathcal{B}_{m,6}| &= \frac{1}{2}\sum_{i=1}^{m-2} (m-i)(m-i-1) \\[1mm]
	&= \frac{1}{2}\sum_{i=1}^{m-1} (m-i)^2 - \frac{1}{2}\sum_{i=1}^{m-1} (m-i) 
	= \frac{1}{2}\sum_{i=1}^{m-1} i^2 - \frac{1}{2}\sum_{i=1}^{m-1} i \\[1mm]
	&= \frac{1}{12}m(m-1)(2m-1) - \frac{1}{4} m(m-1) = \frac{1}{6}m(m-1)(m-2).
\end{align*}
Summing these contributions, we obtain
\[
B_m = \sum_{t=1}^{9} |\mathcal{B}_{m,t}| = \frac{1}{6}m^3 + \frac{1}{2}(n - 4)m^2 + \frac{1}{6}(59 - 9n)m + 2n - 11. \qedhere
\]
\end{proof}	

\begin{lemma} \label{l8}
	The \mbox{$[P_n]$-degree} of the vertex $l$ in $G(m,n)$ equals
	\[
	L = \frac{1}{3}m^3 + \frac{1}{2}(n - 4)m^2 + \frac{1}{6}(52 - 9n)m + n - 8.
	\]
\end{lemma}

\begin{proof}
	The family $\mathcal{P}_n(G(m,n), l)$ decomposes into six disjoint subfamilies
	\begin{align*}
		\mathcal{L}_1 &= \bigl\{ x_2 \dots x_{n-2} a_i b_j l \bigm| 1 \le j \le i \le m \bigr\}, \\[1mm]
		\mathcal{L}_2 &= \bigl\{ x_3 \dots x_{n-2} a_i b_j l b_q \bigm| 1 \le j \le i \le m-1, \, i+1 \le q \le m \bigr\}, \\[1mm]
		\mathcal{L}_3 &= \bigl\{ x_k \dots x_{n-2} a_i b_j l b_m y_{n-3}\dots y_{n-k+1} \bigm| 4 \le k \le n-2, \, 2 \le j \le i \le m-1 \bigr\}, \\[1mm]
		\mathcal{L}_4 &= \bigl\{ a_i a_p b_j l b_m y_{n-3}\dots y_{3} \bigm| 1 \le i < p \le m-1, \, i+1 \le j \le p \bigr\}, \\[1mm]
		\mathcal{L}_5 &= \bigl\{ a_i b_j l b_m y_{n-3}\dots y_{2} \bigm| 2 \le j \le i \le m-1 \bigr\}, \\[1mm]
		\mathcal{L}_6 &= \bigl\{ b_j l u y_{n-3}\dots y_{1} \bigm| 2 \le j \le m-1, \, u \in \{b_1, b_m\} \bigr\}.
	\end{align*}
Recalling that $\mathcal{L}_2 = \mathcal{X}_{3,1}'$ from Lemma~\ref{l4} and $\mathcal{L}_4 = \mathcal{B}_{m,6}$ from Lemma~\ref{l7}, these cardinalities are determined as
\begin{alignat*}{3}					
	|\mathcal{L}_1| &= \frac{1}{2}m(m+1), \quad & |\mathcal{L}_2| &= \frac{1}{6}m(m^2-1), \qquad & |\mathcal{L}_3| &= \frac{1}{2}(n-5)(m-1)(m-2), \\[1mm]
	|\mathcal{L}_6| &= 2(m-2), \quad & |\mathcal{L}_5| &= \frac{1}{2}(m-1)(m-2), \quad  &|\mathcal{L}_4| &= \frac{1}{6}m(m-1)(m-2).
\end{alignat*}
Summing these contributions yields $L$: 
	\[
	L =\sum_{t=1}^{6} |\mathcal{L}_t| = \frac{1}{3}m^3 + \frac{1}{2} (n - 4) m^2 + \frac{1}{6}(52 - 9n)m + n - 8. \qedhere
	\] 
\end{proof}

In the following statements, we compare the $[P_n]$-degrees of vertices in the graph $G(m,n)$ for $n \ge 6$ and $m \ge 2n$, along with additional constraints on the parameter $m$.

\begin{lemma}\label{l9}
	The sequence $\{A_i\}_{i=1}^{m-1}$ is strictly increasing, whereas the sequence $\{B_j\}_{j=2}^{m-1}$ is strictly decreasing. Moreover,
	\[
	A_i \neq B_j \qquad \text{for all } 1 \le i \le m-1 \text{ and } 2 \le j \le m-1.
	\]
\end{lemma}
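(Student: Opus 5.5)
The approach is to read off both sequences directly from the explicit formulas of Lemma~\ref{l6} and Lemma~\ref{l7}, and then reduce the inequality $A_i \neq B_j$ to a small divisibility statement.

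\textbf{Monotonicity.} By Lemma~\ref{l6}, for $1 \le i \le m-1$ we have $A_i = \frac{1}{2}m^2 - \frac{1}{2}m + (n-1)i + 2$. As a function of $i$ this is affine with slope $n-1 \ge 5 > 0$, so $\{A_i\}_{i=1}^{m-1}$ is strictly increasing. By Lemma~\ref{l7}, for $2 \le j \le m-1$ we have $B_j = \frac{1}{2}m^2 + \frac{1}{2}(2n-3)m + 6 - (n-1)j$. This is affine in $j$ with slope $-(n-1) < 0$, so $\{B_j\}_{j=2}^{m-1}$ is strictly decreasing.

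\textbf{Distinctness.} I would compute the difference $B_j - A_i$ explicitly. The $\frac{1}{2}m^2$ terms cancel, and the linear terms combine as $\frac{1}{2}(2n-3)m + \frac{1}{2}m = (n-1)m$. The constants give $6 - 2 = 4$. Hence
\[
B_j - A_i = (n-1)(m - i - j) + 4 .
\]
So $A_i = B_j$ would force $(n-1)(i + j - m) = 4$, which I rule out in two cases:
\begin{enumerate}
	\item If $i + j = m$, the left-hand side is $0 \neq 4$.
	\item If $i + j \neq m$, the left-hand side is a nonzero multiple of $n-1$. Its absolute value is then at least $n-1 \ge 5 > 4$, again a contradiction.
\end{enumerate}
Therefore $A_i \neq B_j$ for all $1 \le i \le m-1$ and $2 \le j \le m-1$.

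\textbf{Main obstacle.} The only real work is the algebraic simplification of $B_j - A_i$, in particular checking that the quadratic terms in $m$ cancel and the linear coefficient collapses to exactly $(n-1)m$. The argument then uses only the hypothesis $n \ge 6$, through $n - 1 \ge 5$. It does not need $m \ge 2n$ or any further constraint on $m$.
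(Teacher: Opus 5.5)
Your proposal is correct and follows essentially the same route as the paper. Monotonicity is read off the affine formulas of Lemmas~\ref{l6} and~\ref{l7}, and distinctness comes from the identity $A_i - B_j = (n-1)(i+j-m) - 4$. The paper rules this out by noting $-4 \not\equiv 0 \pmod{n-1}$ for $n \ge 6$; your observation that $(n-1)(i+j-m)$ is either $0$ or has absolute value at least $n-1 \ge 5$ is the same fact.
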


\begin{proof}
	The strict increase and decrease of these sequences follow from Lemmas~\ref{l6} and \ref{l7}, respectively. Furthermore, for any admissible pair $(i, j)$ and $n \ge 6$, we have the modular relation
	\[
	A_i - B_j = (n - 1)(i + j - m) - 4 \equiv -4 \not\equiv 0 \pmod{n-1},
	\]
	which yields $A_i \neq B_j$.\qedhere
\end{proof}

\begin{lemma}\label{l10}
	The following inequalities hold
	\begin{align*}
		X_2 &> A_i > A_m \qquad \text{for all } 1 \le i \le m-1, \\
		X_2 &> B_j > A_m \qquad \text{for all } 2 \le j \le m-1.
	\end{align*}
\end{lemma}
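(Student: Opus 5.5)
By Lemmas~\ref{l9}, \ref{l6} and \ref{l7}, the sequence $\{A_i\}$ is strictly increasing and $\{B_j\}$ is strictly decreasing. So each of the four chains reduces to comparing the relevant extreme terms with $X_2$ and with $A_m$. Concretely, it suffices to show
\[
X_2 > A_{m-1},\quad A_1 > A_m,\quad X_2 > B_2,\quad B_{m-1} > A_m .
\]
Each of these is a comparison of explicit polynomials in $m$ and $n$, taken from Lemmas~\ref{l4}, \ref{l6} and \ref{l7}. I will check each one directly, using $n\ge 6$ and $m\ge 2n$.

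For the upper bounds I will use $X_2=(m+1)^2=m^2+2m+1$. First, $A_{m-1}=\tfrac12 m^2-\tfrac12 m+(n-1)(m-1)+2$, and
\[
X_2-A_{m-1}=\tfrac12 m^2-(n-\tfrac72)m+n .
\]
Since $m\ge 2n$ gives $\tfrac12 m^2\ge nm$, this difference is at least $\tfrac72 m+n>0$. Second, $B_2=\tfrac12 m^2+\tfrac12(2n-3)m+6-2(n-1)$, and
\[
X_2-B_2=\tfrac12 m^2-(n-\tfrac72)m+2n-7 ,
\]
which is positive by the same estimate.

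For the lower bounds I will use $A_m=5m+2n-11$. First, $A_1=\tfrac12 m^2-\tfrac12 m+n+1$, and
\[
A_1-A_m=\tfrac12 m^2-\tfrac{11}{2}m-n+12 .
\]
With $m\ge 2n\ge 12$ we have $\tfrac12 m^2\ge 6m$, so the difference is at least $\tfrac12 m-n+12\ge 12>0$. Second, $B_{m-1}=\tfrac12 m^2+\tfrac12(2n-3)m+6-(n-1)(m-1)$, which simplifies to $\tfrac12 m^2-\tfrac12 m+n+5$. Then
\[
B_{m-1}-A_m=\tfrac12 m^2-\tfrac{11}{2}m-n+16>0
\]
by the same argument.

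The main obstacle is only arithmetic care: simplifying $B_{m-1}$ correctly, and checking that the hypothesis $m\ge 2n$ is enough (which it is, since $m\ge12$ already). No new combinatorial input is needed. Monotonicity from Lemma~\ref{l9} then transfers these four boundary inequalities to all indices $1\le i\le m-1$ and $2\le j\le m-1$.
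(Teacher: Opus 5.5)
Your proposal is correct and follows essentially the same route as the paper: the monotonicity from Lemma~\ref{l9} reduces everything to the four boundary comparisons $X_2>A_{m-1}$, $X_2>B_2$, $A_1>A_m$, $B_{m-1}>A_m$, which are then checked from the explicit formulas using $m\ge 2n\ge 12$. There is one small arithmetic slip that does not affect the conclusion: since $(n-1)(m-1)=nm-n-m+1$, the correct difference is $X_2-A_{m-1}=\tfrac12 m^2-(n-\tfrac72)m+n-2$, so your lower bound should read $\tfrac72 m+n-2$, which is still positive.
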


\begin{proof}
	By the monotonicity of $\{A_i\}_{i=1}^{m-1}$ and $\{B_j\}_{j=2}^{m-1}$ from Lemma~\ref{l9}, it suffices to verify the upper bounds for the maximal terms $A_{m-1}, B_2$ and the lower bounds for the minimal terms $A_1, B_{m-1}$. Since $m \ge 2n \ge 12$, applying the explicit formulas from Lemmas~\ref{l4}, \ref{l6}, and \ref{l7} and bounding $n$ from above by $\dfrac{m}{2}$, we obtain the required inequalities:
	\begin{align*}
		X_2 - A_{m-1} &= \dfrac{1}{2}m^2 + \dfrac{5}{2}m - 1 - (n-1)(m-1) \ge 4m - 2 > 0 \implies X_2 > A_{m-1}, \\[1mm]
		X_2 - B_2     &= \dfrac{1}{2}m^2 + \dfrac{7}{2}m - 7 - n(m-2) \ge \dfrac{1}{2}(9m-14) > 0 \implies X_2 > B_2, \\[1mm]
		A_1 - A_m     &= \dfrac{1}{2}m^2 - \dfrac{11}{2}m + 12 - n \ge \dfrac{1}{2}\bigl((m-6)^2 - 12\bigr) > 0 \implies A_1 > A_m, \\[1mm]
		B_{m-1} - A_m &= \dfrac{1}{2}m^2 - \dfrac{11}{2}m + 16 - n \ge \dfrac{1}{2}\bigl((m-6)^2 - 4\bigr) > 0 \implies B_{m-1} > A_m. \qedhere
	\end{align*}
\end{proof}

\begin{lemma}\label{l11}
	If $n = 7$ with $m \equiv 1 \pmod 6$, or $n \neq 7$ with $m \equiv 3 \pmod{n-1}$, then
	\[
	\{X_1, Y_1\} \cap \{A_i, B_j\} = \emptyset \qquad \text{for all } 1 \le i \le m-1 \text{ and } 2 \le j \le m-1.
	\]
\end{lemma}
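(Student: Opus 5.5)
The plan is to reuse the modular argument from Lemma~\ref{l9}: I will show that each of the four differences $X_1 - A_i$, $Y_1 - A_i$, $X_1 - B_j$ and $Y_1 - B_j$ is nonzero modulo $n-1$. All quantities involved are integers, since they count paths. So it suffices to compute each difference explicitly and reduce it modulo $n-1$. In each difference the index $i$ or $j$ appears only through the term $\pm(n-1)i$ or $\pm(n-1)j$, so that term vanishes modulo $n-1$ and the residue depends only on $m$ and $n$. This handles all admissible $i$ and $j$ at once, with no monotonicity or size estimates needed.

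\textbf{Differences with $A_i$.} Using Lemma~\ref{l4}, Lemma~\ref{l5} and Lemma~\ref{l6}, the quadratic terms cancel, and I expect
\[
X_1 - A_i = m - 2 - (n-1)i \equiv m-2, \qquad Y_1 - A_i = 5m - 9 - (n-1)i \equiv 5m-9 \pmod{n-1}.
\]

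\textbf{Differences with $B_j$.} Using Lemma~\ref{l7}, the $\tfrac12 m^2$ terms cancel again and the linear terms combine to $(2-n)m$ and $(6-n)m$ respectively. This gives
\[
X_1 - B_j = -(n-2)m - 6 + (n-1)j \equiv m - 6, \qquad Y_1 - B_j = (6-n)m - 13 + (n-1)j \equiv 5m - 13 \pmod{n-1}.
\]

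\textbf{The case $n \neq 7$.} Here I substitute $m \equiv 3 \pmod{n-1}$. The four residues become $1$, $6$, $-3$ and $2$. Since $n \ge 6$, we have $n-1 \ge 5$, so $1$, $-3$ and $2$ are never $\equiv 0$. The residue $6$ is $\equiv 0$ only if $(n-1) \mid 6$, which forces $n-1 = 6$, i.e. $n = 7$. This is excluded in this case, and it explains why the lemma must treat $n=7$ separately.

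\textbf{The case $n = 7$.} Here $n-1 = 6$ and I substitute $m \equiv 1 \pmod 6$. The residues become $m-2 \equiv 5$, $5m-9 \equiv 2$, $m-6 \equiv 1$ and $5m-13 \equiv 4 \pmod 6$, all nonzero.

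\textbf{Main obstacle.} The only real work is the exact algebra for the four differences. The main care point is simplifying $-\tfrac12(2n-3)m + \tfrac12 m$ and $\tfrac92 m - \tfrac12(2n-3)m$ correctly, so that the reductions above are exact. Once these are in hand, the residue checks in the two cases give $\{X_1, Y_1\} \cap \{A_i, B_j\} = \emptyset$ for all $1 \le i \le m-1$ and $2 \le j \le m-1$.
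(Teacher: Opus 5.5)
Your proposal is correct and follows the paper's proof essentially verbatim: you reduce the same four differences $X_1 - A_i$, $X_1 - B_j$, $Y_1 - A_i$, $Y_1 - B_j$ modulo $n-1$ to $m-2$, $m-6$, $5m-9$, $5m-13$, and you check the same residues, namely $5,1,2,4$ for $n=7$ and $1,\,-3\equiv n-4,\,6,\,2$ for $n\neq 7$. Your observation that the residue $6$ is exactly what forces $n=7$ to be treated separately is a useful point that the paper leaves implicit.
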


\begin{proof}
	For each admissible pair $(i, j)$, we show that any two $[P_n]$-degrees from distinct sets $\{X_1, Y_1\}$ and $\{A_i, B_j\}$ have incongruent residues modulo $n-1$. 
	
	Indeed, by Lemmas~\ref{l4}--\ref{l7}, we have
	\begin{align*}
		X_1 - A_i &= m - 2 - i(n-1) \equiv m - 2 \pmod{n-1}, \\[1mm]
		X_1 - B_j &= 2m - mn - 6 + j(n-1) \equiv m - 6 \pmod{n-1}, \\[1mm]
		Y_1 - A_i &= 5m - 9 - i(n-1) \equiv 5m - 9 \pmod{n-1}, \\[1mm]
		Y_1 - B_j &= 6m - mn - 13 + j(n-1) \equiv 5m - 13 \pmod{n-1}.
	\end{align*}
If $n = 7$ and $m \equiv 1 \pmod 6$, these differences yield the non-zero residues $5, 1, 2, 4 \pmod 6$, respectively. 
If $n \ge 6$ and $n \neq 7$, the condition $m \equiv 3 \pmod{n-1}$ implies that the respective residues are $1, n-4, 6, 2 \pmod{n-1}$, none of which is congruent to $0$. 
Hence, the assertion follows.
\end{proof}

\begin{lemma}\label{l12}
	If $n \ge 7$, then the $[P_n]$-degrees of vertices in the graph $G(m,n)$ satisfy
	\begin{align}
		L > X_{n-2} > Y_{n-3} > X_{n-3} > B_m &> X_{n-4}, \label{l12:eq1}\\[1mm]
		X_i > Y_i &> X_{i-1} \qquad \text{for } n \ge 8 \text{ and } 4 \le i \le n-4, \label{l12:eq2}\\[1mm]
		X_3 > Y_3 > B_1 > Y_2 > X_2 > Y_1 > X_1 &> A_m, \label{l12:eq3}
	\end{align}
	whereas for $n = 6$ the linear order takes the form
	\[
	L > X_4 > Y_3 > X_3 > B_m > B_1 > Y_2 > X_2 > Y_1 > X_1 > A_m.
	\]
\end{lemma}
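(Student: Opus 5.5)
Each inequality in the chain will be checked by subtracting the two explicit polynomials in $m$ from Lemmas~\ref{l4}, \ref{l5}, \ref{l6}, \ref{l7} and \ref{l8}. We then show that the resulting difference is positive under the standing hypotheses $n \ge 6$ and $m \ge 2n$. Whenever $n$ occurs with an unfavourable sign, we bound it by $n \le m/2$, as in Lemma~\ref{l10}.

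\textbf{Chain \eqref{l12:eq2}.} Both $X_i$ and $Y_i$ (for $4 \le i \le n-4$) come from the general formulas, which have identical leading parts. Hence $X_i - Y_i = 4$ immediately. For the other inequality, $Y_i - X_{i-1} = \frac{1}{2}m^2 - \frac{1}{2}m - 2 > 0$. The same computation gives $X_3 - Y_3 = 4$ when $n \ge 7$, which is the first link of \eqref{l12:eq3}.

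\textbf{Top of \eqref{l12:eq1}.} I would use the following differences.
\begin{align*}
L - X_{n-2} &= \tfrac{1}{6}m^3 - \tfrac{1}{2}m^2 + \dots,\\
X_{n-2} - Y_{n-3} &= \tfrac{1}{2}m^2 - \tfrac{3}{2}m + 5,\\
Y_{n-3} - X_{n-3} &= \tfrac{1}{2}m + \text{const}.
\end{align*}
The first difference is cubic with positive leading term. Its coefficient of $m$ contains $-n$, which I would absorb using $n \le m/2$.

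\textbf{Middle of \eqref{l12:eq1}.} The comparisons $X_{n-3} > B_m > X_{n-4}$ are more delicate. Both $X_{n-3}$ and $B_m$ have the form $\frac{1}{6}m^3 + \frac{1}{2}(n-4)m^2 + \dots$. Their difference is therefore linear in $m$: it equals $(n - c)\,m + \text{const}$ for an explicit constant $c$. Its sign must be checked using $n \ge 7$. This is exactly why the case $n = 6$ behaves differently, since there $X_{n-4} = X_2$ is quadratic. The difference $B_m - X_{n-4}$ is $\frac{1}{2}m^2 + O(nm)$. Positivity of this one needs $m \ge 2n$, and I expect it to be the main obstacle, requiring the careful substitution $n \le m/2$.

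\textbf{Remaining links of \eqref{l12:eq3}.} The comparison $Y_3 > B_1$ holds because $Y_3$ is cubic while $B_1$ is quadratic; when $n = 7$, $Y_3$ is the value $Y_{n-3}$. The remaining links are quadratic comparisons:
\begin{align*}
B_1 - Y_2 &= (n-4)m + 1,\\
Y_2 - X_2 &= 2m - 6,\\
X_2 - Y_1 &= \tfrac{1}{2}m^2 - \tfrac{5}{2}m + 8,\\
Y_1 - X_1 &= 4m - 7,\\
X_1 - A_m &= \tfrac{1}{2}m^2 - \tfrac{9}{2}m - 2n + 11 > 0.
\end{align*}
The last one uses $n \le m/2$ and $m \ge 12$.

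\textbf{The case $n = 6$.} The same differences are recomputed with $Y_{n-3} = Y_3$ and $X_{n-2} = X_4$. The one new link is $B_m > B_1$: here $B_m$ is cubic and $B_1$ is quadratic, so this holds for $m \ge 12$.

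For every case I would expand each difference explicitly and verify its positivity over the integers $m \ge 2n$.
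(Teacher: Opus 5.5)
Your plan is exactly the paper's proof: subtract the explicit formulas of Lemmas~\ref{l4}--\ref{l8} link by link, and use $n \le m/2$ wherever $n$ enters negatively, the key instance being $B_m - X_{n-4} = \frac12 m^2 - mn + \frac52 m + 2 \ge \frac52 m + 2$. A few of your stated values are slightly off but do not affect positivity: $X_{n-2}-Y_{n-3} = \frac12(m^2-3m+14)$, $Y_{n-3}-X_{n-3} = m-5$, and $X_{n-3}-B_m = (n-3)m$, which is already positive for $n=6$ (the $n=6$ chain differs because $X_{n-4}=X_2$ lies below $B_1$ and $Y_2$, not because of this link); also, for $n=7$ one has $Y_3 = Y_{n-4}$ (given by the general formula), not $Y_{n-3}$.
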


\begin{proof}
By virtue of Lemmas~\ref{l4}--\ref{l8}, the required inequalities follow from the constraint $m \ge 2n \ge 12$ and the evaluations below. 
		
Chain~\eqref{l12:eq1} is established by
\begin{align*}
	L - X_{n-2}       &= \dfrac{1}{6}\bigl(m^3 - 3m^2 + 14m - 6n(m+1) + 6\bigr) \ge \dfrac{1}{6}(m^3 - 6m^2 + 11m + 6) > 0, \\[1.5mm]
	X_{n-2} - Y_{n-3} &= \dfrac{1}{2}(m^2 - 3m + 14) > 0, \\[1.5mm] 
	Y_{n-3} - X_{n-3} &= m - 5 > 0, \quad
	X_{n-3} - B_m = m(n-3) > 0, \\[1.5mm] 
	B_m - X_{n-4}     &= \dfrac{1}{2}m^2 - mn + \dfrac{5}{2}m + 2 \ge \dfrac{5}{2}m + 2 > 0.
\end{align*}
Inequalities~\eqref{l12:eq2} follow from
\begin{align*}
	X_i - Y_i &= 4 > 0, \qquad Y_i - X_{i-1} = \dfrac{1}{2}(m^2 - m - 4) > 0.
\end{align*}
Chain~\eqref{l12:eq3} is obtained by
\begin{align*}
	X_3 - Y_3 &= 4 > 0, \quad Y_3 - B_1 = \frac{1}{6}m(m^2 + 23 - 6n) + 1 \ge \frac{1}{6}m(m^2 + 23 - 3m) + 1 > 0,  \\[1.5mm]
	B_1 - Y_2 &= m(n-4) + 1 > 0, \quad Y_2 - X_2 = 2m - 6 > 0, \\[1.5mm]
	X_2 - Y_1 &= \dfrac{1}{2}(m^2 - 5m + 16) > 0, \quad Y_1 - X_1 = 4m - 7 > 0, \\[1.5mm]
	X_1 - A_m &= \frac{1}{2}(m^2 - 9m + 22 - 4n) \ge \frac{1}{2}(m^2 - 11m + 22) > 0.
\end{align*}
If $n = 6$, the relations $L > X_4 > Y_3 > X_3 > B_m$ and $B_1 > Y_2 > X_2 > Y_1 > X_1 > A_m$ hold identically to~\eqref{l12:eq1} and~\eqref{l12:eq3} for $n \ge 7$, respectively. The remaining link between $B_m$ and $B_1$ follows from
\[
B_m - B_1 = \frac{1}{6}m(m^2- 31) + 5 > 0. \qedhere
\]		
\end{proof}

\begin{theorem}\label{t6}	
		For all integers $n \ge 6$ and $m \ge 2n$ satisfying either $n = 7$ with $m \equiv 1 \pmod 6$, or $n \neq 7$ with $m \equiv 3 \pmod{n-1}$, the graph $G(m,n)$ is $[P_n]$-irregular.
\end{theorem}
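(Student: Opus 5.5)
The plan is to assemble the comparisons already established in Lemmas~\ref{l9}--\ref{l12} into a single argument that all $2m+2n-4$ vertex $[P_n]$-degrees of $G(m,n)$ are pairwise distinct. Split the vertices into two groups. The \emph{low} group is $\{a_1,\dots,a_{m-1}\}\cup\{b_2,\dots,b_{m-1}\}$. The \emph{high} group consists of the remaining vertices $a_m, b_1, b_m, l$, together with all $x_k$ and $y_s$. We handle distinctness inside the low group, inside the high group, and across the two groups.

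\textbf{Within the low group.} Lemma~\ref{l9} does all the work: the sequence $\{A_i\}_{i=1}^{m-1}$ is strictly increasing, $\{B_j\}_{j=2}^{m-1}$ is strictly decreasing, and no $A_i$ equals any $B_j$. Hence all values in this group are distinct.

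\textbf{Within the high group.} Lemma~\ref{l12} supplies a strict linear order of all high-group values.
\begin{itemize}
\item For $n\ge 7$, concatenate chains \eqref{l12:eq1}, \eqref{l12:eq2} (for $n \ge 8$, with $i$ running from $n-4$ down to $4$) and \eqref{l12:eq3}. This gives $L > X_{n-2} > Y_{n-3} > X_{n-3} > B_m > X_{n-4} > Y_{n-4} > \dots > X_3 > Y_3 > B_1 > Y_2 > X_2 > Y_1 > X_1 > A_m$.
\item Check that the chains link correctly: \eqref{l12:eq2} with $i=4$ ends at $X_3$, where \eqref{l12:eq3} starts. For $n=7$ we have $X_{n-4}=X_3$, so \eqref{l12:eq1} joins \eqref{l12:eq3} directly. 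Every $x_k$ and $y_s$ appears exactly once.
\item For $n=6$, Lemma~\ref{l12} states the full chain explicitly.
\end{itemize}

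\textbf{Across the groups.} Lemma~\ref{l10} shows that every low-group value lies strictly between $A_m$ and $X_2$. Three cases remain.
\begin{itemize}
\item High-group values above $X_2$ in the chain are at least $X_2$, so they cannot meet the low group.
\item $A_m$ is strictly below the low group.
\item The only high-group values strictly between $A_m$ and $X_2$ are $X_1$ and $Y_1$. Lemma~\ref{l11} separates these from every $A_i$ and $B_j$, and its congruence hypotheses on $m$ are exactly those of the theorem.
\end{itemize}

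The main obstacle is bookkeeping rather than new estimates. We must verify that the chains of Lemma~\ref{l12} splice into one total order covering every $x_k$ and $y_s$, with no index gaps at $n=7$ or $n=8$. We must also confirm that $X_1$ and $Y_1$ are the only high values in the window $(A_m, X_2)$. Once both checks are done, the three cases above cover all pairs of vertices, so $G(m,n)$ is $[P_n]$-irregular.
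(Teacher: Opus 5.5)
Your proposal is correct and follows the paper's own argument, which simply invokes Lemmas~\ref{l9}--\ref{l12} to conclude that all vertex $[P_n]$-degrees of $G(m,n)$ are pairwise distinct. Your split into the low group $\{a_1,\dots,a_{m-1}\}\cup\{b_2,\dots,b_{m-1}\}$ and the high group spells out the bookkeeping the paper leaves implicit, namely:
\begin{itemize}
\item the splicing of the chains of Lemma~\ref{l12} into one total order;
\item the observation that only $X_1$ and $Y_1$ lie in the window between $A_m$ and $X_2$ from Lemma~\ref{l10}, which is exactly where Lemma~\ref{l11} is needed.
\end{itemize}
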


\begin{proof}
	By Lemmas~\ref{l9}--\ref{l12}, all vertex $[P_n]$-degrees in $G(m,n)$ are pairwise distinct. This immediately implies that $G(m,n)$ is $[P_n]$-irregular.
\end{proof}

\begin{corollary} \label{c4} 
	For every integer $n \geq 6$, there exist infinitely many \mbox{$[P_n]$-irregular} graphs. 
\end{corollary}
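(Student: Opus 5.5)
The corollary follows from Theorem~\ref{t6} once we show that, for each fixed $n \ge 6$, infinitely many integers $m$ satisfy its hypotheses, and that the resulting graphs $G(m,n)$ are pairwise non-isomorphic. We take the admissible $m$ to be an arithmetic progression intersected with the ray $m \ge 2n$. If $n = 7$, we use $m = 6t+1$ with $t \ge 3$; then $m \ge 19 \ge 14 = 2n$ and $m \equiv 1 \pmod 6$. If $n \ne 7$, we use $m = (n-1)t + 3$ with $t \ge 3$; then $m \ge 3n \ge 2n$ and $m \equiv 3 \pmod{n-1}$. The modulus $n-1 \ge 5$ is positive, so each family is infinite and strictly increasing in $t$.

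For every such $m$, Theorem~\ref{t6} applies and gives that $G(m,n)$ is $[P_n]$-irregular. By the definition of $G(m,n)$, its order is
\[
|G(m,n)| = 2m + (n-2) + (n-3) + 1 = 2m + 2n - 4.
\]
For fixed $n$ this is strictly increasing in $m$. Distinct admissible values of $m$ therefore give graphs of distinct orders, which are pairwise non-isomorphic. This yields infinitely many $[P_n]$-irregular graphs for each $n \ge 6$.

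There is essentially no obstacle; the substantive work is already contained in Lemmas~\ref{l9}--\ref{l12} and Theorem~\ref{t6}. The only points requiring care are the order count and the check that the congruence class meets $m \ge 2n$ infinitely often. The latter holds because the conditions are a single residue class with a positive modulus. Combining this with Corollaries~\ref{c1}, \ref{c2}, \ref{c3}, which cover $n = 3, 4, 5$, completes the proof of Theorem~\ref{t1}.
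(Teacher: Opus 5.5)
Your proof is correct and follows the paper's own route: the paper states Corollary~\ref{c4} as an immediate consequence of Theorem~\ref{t6}. You make explicit the two points the paper leaves unstated. First, the admissible residue class meets $m \ge 2n$ infinitely often, via $m = 6t+1$ for $n=7$ or $m=(n-1)t+3$ otherwise, with $t \ge 3$. Second, the resulting graphs are pairwise non-isomorphic, since $|G(m,n)| = 2m+2n-4$ is strictly increasing in $m$.
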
 

By Corollaries~\ref{c1}--\ref{c4}, the assertion of Theorem~\ref{t1} holds.

\section{Conclusion}
This paper provides the first structural results on $[F]$-irregularity. For any path $P_n$ of order $n \ge 3$, we explicitly constructed infinite families of graphs with pairwise distinct vertex $[P_n]$-degrees, and established that the order $k$ of a non-trivial $[P_3]$-irregular graph can be any integer $k \ge 7$ and no other.  
These results prompt us to formulate the following conjecture.

\begin{conjecture}[{Strong Conjecture about $[F]$-irregular graphs}]
	\label{con3}
	For every connected graph $F$ of order $|F| \ge 3$, there exist infinitely many $[F]$-irregular graphs.
\end{conjecture}

A natural next step is to extend the concept of $[F]$-irregularity to oriented graphs, as was recently done for classical $F$-irregularity~\cite{r6}. First and foremost, we pose the existence problem for $[F]$-irregular oriented graphs where $F$ is a directed path or a directed cycle, both of order $n \ge 3$.

\end{document}